\newtheorem{theorem}{Theorem}[section]
\newtheorem{proposition}[theorem]{Proposition}
\newtheorem{lemma}[theorem]{Lemma}
\theoremstyle{remark}
\newtheorem{remark}[theorem]{Remark}
\newtheorem{example}[theorem]{Example}
\newcommand\da{\dasharrow}
\newcommand\be{\begin{equation}\label}
\newcommand\ee{\end{equation}}
\newcommand{\V}{\mathcal{V}}
\renewcommand{\O}{\mathcal{O}}
\newcommand{\Co}{\mathcal{C}}
\newcommand{\R}{\mathbb{R}}
\newcommand{\C}{\mathbb{C}}
\newcommand{\Z}{\mathbb{Z}}
\newcommand{\pr}{\on{pr}}
\newcommand\lie[1]{\mathfrak{#1}}
\newcommand{\h}{\lie{h}}
\newcommand{\g}{\lie{g}}
\renewcommand{\t}{\lie{t}}
\newcommand{\Alc}{\mathfrak{A}}
\newcommand{\on}{\operatorname}
\newcommand{\Aut}{ \on{Aut} }
\newcommand{\Ad}{ \on{Ad} }
\newcommand{\Hom}{ \on{Hom}} 
\renewcommand{\ker}{ \on{ker}} 
\newcommand{\SU}{ \on{SU}}
\newcommand{\Mult}{ \on{Mult}}
\newcommand\dirac{/\kern-1.2ex\partial} % Dirac operator
\newcommand\qu{/\kern-.7ex/} % Categorical quotients
\newcommand{\Waff}{W_{\on{aff}}} % affine Weylgroup 
\newcommand{\lra}{\longrightarrow}
\newcommand{\hra}{\hookrightarrow}
\renewcommand{\d}{{\mbox{d}}}
\newcommand{\ol}{\overline}
\newcommand\Phinv{\Phi^{-1}}
\newcommand\Sig{\Sigma}
\newcommand\sig{\sigma}
\newcommand\Om{\Omega}
\newcommand\om{\omega}
\newcommand{\f}{\frac}
\renewcommand{\l}{\langle}
\renewcommand{\r}{\rangle}
\newcommand{\hh}{{\textstyle \f{1}{2}}}
\newcommand\beqn{\begin{equation}}      
\newcommand\eeqn{\end{equation}}      
\newcommand{\ca}{\mathcal}
\newcommand{\wt}{\widetilde}
\newcommand{\mf}{\mathfrak}
\newcommand{\beq}{\begin{eqnarray*}}
\newcommand{\eeq}{\end{eqnarray*}}
\renewcommand{\subset}{\subseteq}
\newcommand{\sz}{\mathsf{s}}
\newcommand{\tz}{\mathsf{t}}
\begin{document}

\title[]{Convexity for twisted conjugation}

\vskip.2in
\author{E. Meinrenken}
\address{University of Toronto, Department of Mathematics,
40 St George Street, Toronto, Ontario M4S2E4, Canada }
\email{mein@math.toronto.edu}
\date{\today}
\begin{abstract} 
Let $G$ be a compact, simply connected Lie group.  If $\Co_1,\Co_2$ are two $G$-conjugacy classes, then the set of elements in $G$ that can be written as products $g=g_1g_2$ of elements $g_i\in \Co_i$ is invariant under conjugation, and its image under the quotient map $G\to G/\Ad(G)=\Alc$ is a convex polytope. In this note, we will prove an analogous statement for \emph{twisted conjugations} relative to group automorphisms. The result will be obtained as a special case of a convexity theorem for group-valued moment maps which are equivariant with respect to the twisted conjugation action. 
\end{abstract}
\maketitle 
%%%%%%%%%%%%%%%%%%%%%%%%%%%%%%%%%%%%%%%%%%%%%%%%%%%%%%%%%%%%%%%%%%%%%%%%%%%%%%%%%
%%%%%%%%%%%%%%%%%%%%%%%%%%%%%%%%%%%%%%%%%%%%%%%%%%%%%%%%%%%%%%%%%%%%%%%%%%%%%%%%%
\section{Introduction}
Let $G$ be a compact connected Lie group, with maximal torus $T$, and let $\g,\t$ be their Lie algebras. Fix a positive Weyl chamber $\t_+\subset \t$, and denote by $p\colon \g\to \t_+$ the quotient map, with fiber $p^{-1}(\xi)=\O_\xi$ the adjoint orbit of $\xi$. For any $r>1$, the set
\begin{equation}\label{eq:sum}
 \{(\xi_1,\ldots,\xi_r)\in \t_+\times \cdots \times \t_+\big|\ \exists \zeta_i\in \O_{\xi_i}\colon \ 
\zeta_1+\ldots +\zeta_r=0\}
\end{equation}
is a convex polyhedral known as the \emph{Horn cone}. Fixing $\xi_1,\ldots,\xi_{r-1}$, the Horn cone describes the set of adjoint orbits contained in the sum of adjoint orbits 
$\O_{\xi_1}+\ldots+\O_{\xi_{r-1}}$. For the case of $G=\on{U}(N)$, 
the projection $p(\zeta)$ signifies the set of eigenvalues of a Hermitian matrix $\zeta$, 
hence the Horn cone thus describes the possible eigenvalues of sums of Hermitian matrices with prescribed eigenvalues. The defining inequalities for the $\mf{u}(n)$-Horn cone were obtained by Klyachko \cite{kl:st}, who gave a description in terms of the Schubert calculus of the Grassmannian. This  was extended to arbitrary compact groups by Berenstein-Sjamaar \cite{be:coa}. See  Ressayre \cite{res:ge} and Vergne-Walter \cite{ver:in} for recent developments. 

Suppose in addition that $G$ is simply connected. Let $\Alc\subset \t_+$ be the Weyl alcove. 
Then $\Alc$ labels the set of conjugacy classes in $G$, in the sense that there is a quotient map 
$q\colon G\to \Alc$, with fiber $q^{-1}(\xi)=\Co_\xi$ the conjugacy class of $\exp(\xi)$. As observed in 
Meinrenken-Woodward \cite[Corollary 4.13]{me:lo}, the set 
\begin{equation}\label{eq:product1}
 \{(\xi_1,\ldots,\xi_r)\in \Alc\times \cdots \times \Alc\big|\ \exists g_i\in \Co_{\xi_i}\colon\  g_1\ldots g_r=e\}
 \end{equation}
is a convex polytope. Put differently, this polytope describes the conjugacy classes arising in
products of a collection of prescribed conjugacy classes. In the case of $G=\SU(n)$, it describes the possible eigenvalues of \emph{products} of special unitary matrices with prescribed eigenvalues; these eigenvalue inequalities were determined, in terms of quantum Schubert calculus on flag manifolds,  by Agnihotri-Woodward \cite{ag:ei} and Belkale \cite{bel:loc}. (See also  Belkale-Kumar \cite{bel:mu}.) This was extended to general $G$ by Teleman-Woodward \cite{te:pa}. 

In this note we will show that there are similar polytopes for \emph{twisted} conjugations. Recall that the twisted conjugation action relative to a group automorphism 
$\kappa\in \Aut(G)$ is the action
\begin{equation}\label{eq:twistaction}
 \Ad_g^{(\kappa)}(a)=g\,a\,\kappa(g^{-1}).\end{equation}
As we will explain, 
it suffices to consider automorphisms $\kappa$ defined by Dynkin diagram automorphisms. These automorphisms preserve $\t$, with fixed point set $\t^\kappa$, and there is a convex polytope (\emph{alcove}) $\Alc^{(\kappa)}\subset \t^\kappa$ with a 
quotient map $q^{(\kappa)}\colon G\to \Alc^{(\kappa)}$ whose fiber $(q^{(\kappa)})^{-1}(\xi)=\Co_\xi^{(\kappa)}$ is the $\kappa$-twisted conjugacy class of $\exp(\xi)$. 
\begin{theorem}\label{th:convexity}
Let $\kappa_1,\ldots, \kappa_r$ be diagram automorphisms with 
$\kappa_r\circ \cdots \circ \kappa_1=1$. Then the set 
\begin{equation}\label{eq:product2}
 \{(\xi_1,\ldots,\xi_r)\in \Alc^{(\kappa_1)}\times \cdots\times \Alc^{(\kappa_r)}\big|\ \exists g_i\in \Co_{\xi_i}^{(\kappa_i)}\colon \ \ g_1\cdot\ldots\cdot g_r=e\}
 \end{equation}
 is a convex polytope.
\end{theorem}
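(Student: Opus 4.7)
The strategy is to realize the set \eqref{eq:product2} as the moment polytope of a group-valued moment map in a suitable \emph{twisted} quasi-Hamiltonian setting, and then to apply a convexity theorem in that setting. I would adapt the framework of Alekseev-Malkin-Meinrenken (for group-valued moment maps) and the argument of Meinrenken-Woodward (for the untwisted polytope \eqref{eq:product1}) to accommodate the automorphisms $\kappa_i$.

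First, I would introduce a notion of $\kappa$-twisted quasi-Hamiltonian $G$-space: a $G$-manifold $(M,\om)$ together with a map $\Phi\colon M\to G$ satisfying the usual 2-form axioms, but with equivariance taken with respect to the twisted conjugation \eqref{eq:twistaction}, i.e.\ $\Phi(g\cdot m)=g\,\Phi(m)\,\kappa(g)^{-1}$. The fundamental example is a twisted conjugacy class $\Co_\xi^{(\kappa)}$ with the inclusion as moment map, equipped with a canonical closed 2-form generalizing the Guillemin-Sternberg form on ordinary conjugacy classes.

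Second, I would develop a \emph{twisted fusion product}. If $M_i$ is a $\kappa_i$-twisted q-H $G$-space for $i=1,2$, the Cartesian product $M_1\times M_2$ with diagonal action
\[ h\cdot(m_1,m_2)=(h\cdot m_1,\,\kappa_1(h)\cdot m_2) \]
and moment map $(m_1,m_2)\mapsto \Phi_1(m_1)\Phi_2(m_2)$ is naturally a $(\kappa_2\circ\kappa_1)$-twisted q-H $G$-space, as a short computation using twisted equivariance of $\Phi_1,\Phi_2$ confirms. Iterating, $\Co_{\xi_1}^{(\kappa_1)}\circledast\cdots\circledast\Co_{\xi_r}^{(\kappa_r)}$ is a $(\kappa_r\circ\cdots\circ\kappa_1)$-twisted q-H $G$-space, which under the hypothesis $\kappa_r\circ\cdots\circ\kappa_1=1$ becomes \emph{untwisted}. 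Its symplectic quotient $\Phi^{-1}(e)/G$ is non-empty precisely when $(\xi_1,\ldots,\xi_r)$ lies in \eqref{eq:product2}.

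Third, I would establish the convexity theorem for twisted group-valued moment maps: if $(M,\om,\Phi)$ is a compact, connected $\kappa$-twisted q-H $G$-space, then $q^{(\kappa)}(\Phi(M))\subset \Alc^{(\kappa)}$ is a convex polytope. Applied to the universal twisted fusion $M=G\times\cdots\times G$ (the $i$-th factor $G$ carrying the $\kappa_i$-twisted conjugation action with inclusion as its "partial" moment map, and fusion moment map $\Phi(g_1,\ldots,g_r)=g_1\cdots g_r$), together with the multi-factor enhancement yielding convexity in $\prod_i\Alc^{(\kappa_i)}$, this identifies \eqref{eq:product2} with the image under $(q^{(\kappa_1)},\ldots,q^{(\kappa_r)})$ of the $G$-invariant subset $\Phi^{-1}(e)$, and delivers the claimed convex polytope.

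The principal obstacle is Step 3: the twisted convexity theorem itself. One needs a local normal form / cross-section theorem for twisted conjugation near fixed points of subtori of $T^\kappa$, in which the ordinary affine Weyl group is replaced by its $\kappa$-fixed subgroup (so that the resulting combinatorics is governed by $\Alc^{(\kappa)}$ and the twisted affine Dynkin diagram). Once this local structure is in hand, the global statement follows from the Hilgert-Neeb-Plank / Sjamaar local-to-global convexity principle, and the extraction of Theorem~\ref{th:convexity} from its multi-factor version is essentially formal.
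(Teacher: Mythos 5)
Your proposal follows the same broad architecture as the paper (define $\kappa$-twisted q-Hamiltonian $G$-spaces, develop twisted fusion, prove a convexity theorem for $q^{(\kappa)}\circ\Phi$, then deduce Theorem~\ref{th:convexity}), so steps one through three match the paper's Sections 3 and 4 almost exactly. The gap is in how you extract Theorem~\ref{th:convexity} from the convexity theorem: the ``universal twisted fusion $M=G\times\cdots\times G$'' you invoke is not a well-defined q-Hamiltonian space. The group $G$ itself, equipped with twisted conjugation and the identity map as $\Phi$, does \emph{not} satisfy the moment map axioms (axiom (b) already fails for $\omega=0$), so there is no fusion product of $r$ copies of $G$ to speak of. Moreover, even if one granted such a structure, the fused $G$-action would produce a single $G$-valued moment map $g_1\cdots g_r$, whose image projects onto all of $\Alc$; applying the convexity theorem to \emph{that} $G$-space would yield nothing. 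What is needed is a q-Hamiltonian $G^r$-space whose $r$-component moment map records the $r$ individual twisted conjugacy classes. Your phrase ``multi-factor enhancement yielding convexity in $\prod_i\Alc^{(\kappa_i)}$'' is precisely the missing construction, not a formality.

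The paper supplies this object explicitly: the twisted moduli space of the $r$-holed sphere, $M=\on{Hom}_\kappa\big(\pi_1(\Sigma_0^r,\ca{V}),G\big)\cong G^{2r-2}$, which is a q-Hamiltonian $G^r$-space with $G\kappa_1\times\cdots\times G\kappa_r$-valued moment map given by boundary holonomies (Section~\ref{subsec:twistmod}). Lemma~\ref{lem:modspace0r} identifies the moment image with exactly the set of $(d_1,\ldots,d_r)$ admitting $g_i\in\Ad_G^{(\kappa_i)}(d_i)$ with $\prod g_i=e$. Since $G^r$ is again compact and simply connected, $\kappa_1\times\cdots\times\kappa_r$ is a diagram automorphism of it, and $\Alc^{(\kappa_1\times\cdots\times\kappa_r)}=\Alc^{(\kappa_1)}\times\cdots\times\Alc^{(\kappa_r)}$; so Theorem~\ref{th:convex} applies directly to this $G^r$-space and gives convexity in the product of alcoves. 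To repair your argument you should replace ``$M=G^r$ with fused $G$-action'' either by this moduli space, or equivalently by a fusion of $r-1$ twisted doubles $D(G)$ regarded as a $G^r$-space --- in either case the underlying manifold has $2(r-1)$ group factors, not $r$, and it is the $G^r$-equivariance (not the residual $G$-action after fusing) that carries the information about the individual $\xi_i$.

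Your Step 3 sketch of the cross-section machinery (using the twisted alcove $\Alc^{(\kappa)}$ and the twisted affine Weyl group $\Lambda^{(\kappa)}\rtimes W^\kappa$) is in line with the paper's Section 4; the paper reduces to ordinary Hamiltonian cross-sections via the slices $U_\sigma$ and then cites Lerman--Meinrenken--Tolman--Woodward rather than Hilgert--Neeb--Plank/Sjamaar, but this is a cosmetic difference.
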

It would be interesting to obtain an explicit description of the defining inequalities of the polytopes \eqref{eq:product2}. (In Section \ref{sec:example}, we will work out the case of $G=\SU(3)$ and $r=3$ by direct computation.) Note that these 
polytopes \eqref{eq:product2} arise if one considers products of conjugacy classes of \emph{disconnected} compact Lie groups $K$; indeed each conjugacy class of $K$ is a finite union of twisted conjugacy classes of the identity component $G=K_0$. 

We will obtain Theorem \ref{th:convexity} as a special case of a convexity result for 
group-valued moment maps that are equivariant under \emph{twisted conjugation}. Examples of such spaces are the twisted conjugacy classes, or components of moduli spaces of flat connections for disconnected groups on surfaces with boundary. We have (cf.~ 
Theorem \ref{th:convex}):
\begin{theorem}
Let $(M,\om,\Phi)$ be a compact, connected  q-Hamiltonian $G$-space with a $\kappa$-twisted equivariant moment map 
$\Phi\colon M\to G$. Then 
the fibers of the moment map are connected, and the image 
\[ \Delta(M):=q^{(\kappa)}(\Phi(M))\subset \Alc^{(\kappa)}\]
is a convex polytope.  
\end{theorem}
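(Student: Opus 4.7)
The plan is to adapt the Morse-theoretic proof of the untwisted convexity theorem for group-valued moment maps (Alekseev--Malkin--Meinrenken) to the $\kappa$-twisted setting. A useful preliminary observation is that, since $\kappa$ has finite order, the $\kappa$-twisted conjugation of $G$ on itself is the restriction, via $g\mapsto g\kappa$, of the ordinary conjugation action of $G$ on the component $G\kappa$ of the semidirect product $\wt G = G\rtimes\langle\kappa\rangle$. Under this identification, $\Phi$ becomes an ordinary equivariant map from $M$ into one component of the disconnected compact group $\wt G$, and $\Alc^{(\kappa)}$ parametrizes the $G$-conjugacy classes meeting this component.

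The first main step is a local normal form around each fiber of $\Phi$. For $\xi\in\Alc^{(\kappa)}$ in the interior of an open face $\sigma$, the $G$-stabilizer of $\exp(\xi)$ under twisted conjugation is a connected compact subgroup $H\subset G$---the $\kappa$-twisted centralizer---whose root system is the $\kappa$-folded subsystem supported on the walls containing $\sigma$, and which contains the fixed torus $T^\kappa$. Using the slice theorem for twisted conjugation, one identifies a $G$-invariant neighborhood of $\Phi^{-1}(\exp(\sigma))$ with an associated bundle $G\times_H Y$, where $Y$ is a q-Hamiltonian $H$-space; after linearization via the exponential map, the problem reduces locally to that of a Hamiltonian $H$-space whose moment map effectively lands in $\t^\kappa$.

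The second main step is the Morse-theoretic connectedness argument. For $v\in(\t^\kappa)^*$ in an appropriate positive chamber, form the function $f_v=\langle\log(\Phi),v\rangle$ on $\Phi^{-1}(U_\sigma)$, defined via a branched partial logarithm of $G$. From the local normal form one verifies that $f_v$ is Morse--Bott with even-dimensional critical submanifolds of even Morse index and coindex, and Atiyah's classical argument then yields connectedness of the level sets of $f_v$. Varying $v$ gives connectedness of every fiber of $q^{(\kappa)}\circ\Phi$, and combined with local convexity of $\Delta(M)$ at each boundary point (inherited from the local torus picture) together with compactness of $M$, this shows $\Delta(M)$ is a convex polytope. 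The main obstacle I anticipate is the verification of the even-index property in the twisted setting: one has to decompose the tangent space at critical points using the slice theorem while tracking how $\kappa$ permutes and identifies root spaces, and the non-reduced case (as for the order-two automorphism of a type-$A_{2n}$ factor) requires particular care in pairing Hessian eigenvalues.
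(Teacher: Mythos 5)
Your proposal takes a genuinely different route from the paper. The paper does not run a Morse-theoretic argument at all: it first establishes a principal cross-section theorem showing that there is a distinguished face $\sigma$ of $\Alc^{(\kappa)}$ for which $Y_\sigma = \Phinv(U_\sigma)$ is a \emph{connected} q-Hamiltonian $T^\kappa$-space with moment map valued in $\exp(\sigma)$, then converts this to an ordinary Hamiltonian $T^\kappa$-space whose moment map $q^{(\kappa)}\circ\Phi|_{Y_\sigma}$ is \emph{proper} as a map to $\sigma$, and finally quotes the non-compact abelian convexity theorem of Lerman--Meinrenken--Tolman--Woodward \cite[Theorem 4.3]{le:co}, plus a short continuity argument from the same paper, for the polytope statement and fiber connectedness. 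Your opening observation---that twisted conjugation is ordinary conjugation on the component $G\kappa$ of $G\rtimes\langle\kappa\rangle$---is correct and matches the paper's Section 4.2; and your use of twisted cross-sections matches the paper's Proposition~\ref{prop:slice}. Where you diverge is in proposing to re-derive the abelian input by hand via Atiyah's Morse--Bott argument applied to $f_v=\langle\log\Phi,v\rangle$ on $\Phinv(U_\sigma)$.

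The main gap is not the one you flag. You worry about the even-index verification for non-reduced orbit types of the diagram automorphism, but once you are in the cross-section $Y_\sigma$ the restricted automorphism $\kappa_\sigma$ is \emph{inner} (it is $\Ad_{a^{-1}}$ for $a\in\exp(\sigma)$), so the twisting disappears and the Hessian computation is exactly the classical one; there is nothing $A_{2n}$-specific left to check there. The real difficulty is that $\Phinv(U_\sigma)$ is an open, generally non-compact subset of $M$, and Atiyah's classical argument (level sets of a Morse--Bott function with even indices and coindices are connected) is a theorem about compact manifolds. On $Y_\sigma$ the function $f_v$ is only defined after the partial logarithm and is not proper in any single direction $v$; one needs the full moment map $q^{(\kappa)}\circ\Phi|_{Y_\sigma}$ to be proper as a map to the open face $\sigma$, and then a convexity/connectedness theorem adapted to that proper setting. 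That is precisely the content of \cite[Theorem 4.3]{le:co}, which the paper invokes in lieu of a direct Morse argument; if you want to do Morse theory directly you must either reprove that result (keeping careful track of how level sets escape to the boundary of $\sigma$) or restrict to sublevel sets and take a limit, neither of which your sketch addresses. A second, smaller gap: you need the principal cross-section itself to be connected before any level-set argument can give global connectedness of $\Delta(M)$; the paper proves this by a codimension-$\ge 3$ argument for the complementary strata, and your sketch does not supply a substitute.
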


%Theorem \ref{th:local} follows from a more general result for such moment maps, relating `small' products and sums.  
In a very recent paper, Boalch and Yamakawa \cite{boa:twi} independently considered twisted group-valued moment maps in the context of twisted wild character varieties, generalizing earlier results of Boalch \cite{boa:qu,boa:geo}. In particular, their work has a discussion of twisted moduli spaces, similar to Section \ref{subsec:twistmod}. I also learned about a forthcoming article by Alex Takeda, using twisted group-valued moment maps in the setting of shifted symplectic geometry. 

\vskip.3in
\noindent{\bf Acknowledgments.} I am grateful to Tom Baird for discussions on twisted conjugations and twisted moduli spaces, and to the referee for requesting an explicit example. 

\section{Twisted conjugation}
We begin by reviewing some background material on twisted conjugation actions. 
References include Baird \cite{bai:cl}, Kac \cite{kac:inf}, Mohrdieck \cite{moh:the}, Mohrdieck-Wendt \cite{moh:int}, and Springer \cite{spr:twi}. 

\subsection{Twisted conjugation}
Let $\on{Aut}(G)$ be the group of automorphisms of a Lie group $G$, and 
$\on{Inn}(G)\cong G/Z(G)$ the normal subgroup of inner automorphisms $\Ad_a,\ a\in G$. The quotient group is denoted  $\on{Out}(G)=\on{Aut}(G)/\on{Inn}(G)$. 
For $\kappa\in \on{Aut}(G)$, define the  $\kappa$-twisted conjugation action as 
\[ \Ad_g^{(\kappa)}(h)=gh\kappa(g^{-1}).\] 
Its orbits $\Co\subset G$ are called the $\kappa$-\emph{twisted conjugacy classes}. In terms of the semi-direct product $G\rtimes \on{Aut}(G)$, the twisted conjugation action can be regarded
as an ordinary conjugation, 
\[ (g,1)(h,\kappa)(g^{-1},1)=(gh\kappa(g^{-1}),\kappa).\]
For this reason, we will sometimes use the notation $G\kappa$ for the space  $G$, regarded as a $G$-space under $\kappa$-twisted conjugation. For later reference, we note that if 
$\kappa_1,\kappa_2$ are two automorphisms, then 
\begin{equation}\label{eq:later}
 \Ad_g^{(\kappa_2\kappa_1)}(h_1h_2)=\Ad_g^{(\kappa_1)}(h_1) \Ad_{\kappa_1(g)}^{(\kappa_2)}(h_2)\end{equation}
for all $g,h_1,h_2\in G$. 

The differential of $\kappa\in \on{Aut}(G)$ at the group unit is an automorphism of the Lie algebra $\g$, still denoted by $\kappa$. 
The generating vector fields for the $\kappa$-twisted conjugation action are $\xi_G=\kappa(\xi)^L-\xi^R$ for $\xi\in\g$. In terms of \emph{right} trivialization of the tangent bundle, we have $\xi_G(h)=(\Ad_h\circ\,\kappa-I)\xi$. Hence, the Lie algebra of the stabilizer of $h\in G$ is 
\begin{equation}\label{eq:stab} \g_h=\ker(\Ad_h\circ\,\kappa-I),\end{equation}
while the tangent space to the twisted conjugacy class $\Co=\Ad_G^{(\kappa)}(h)$ is 
\begin{equation}\label{eq:tangent}T_h\Co=\on{ran}(\Ad_h\circ\,\kappa-I),\end{equation}
in \emph{right} trivialization $T_hG=\g$.

Suppose $\kappa'=\Ad_a\circ\, \kappa$ for some $a\in G$. Then the corresponding twisted conjugations are related by right multiplication $r_a\colon G\to G$: 
\[ r_a\circ \Ad_g^{(\kappa')}=\Ad_g^{(\kappa)}\circ\, r_a.\] 
That is, $g\mapsto ga^{-1}$ defines a $G$-map $G\kappa\to G\kappa'$. 
In particular, if $\Co$ is a $\kappa$-twisted conjugacy class then $\Co'=r_{a^{-1}}(\Co)$ is a 
$\kappa'$-twisted conjugacy class. 
\begin{example}\label{ex:conjugacy}
Suppose $\kappa_1,\ldots,\kappa_r\in \Aut(G)$, and let $\Co_i$ be $\kappa_i$-twisted 
conjugacy classes. Then the subset 
\[ \Co_1\cdots \Co_r:=\{h_1\cdots h_r|\ h_i\in \Co_i\}\subset G\] 
is invariant under  $\kappa:=\kappa_r\cdots \kappa_1$-twisted conjugation. This follows by induction from 
\eqref{eq:later}.
%Indeed we have, for all $g\in G$, \[ g (h_1\cdots h_r)=\wt{h}_1 \kappa_1(g) h_2\cdots h_r =\wt{h}_1\wt{h}_2 \kappa_2(\kappa_1(g)) h_3\cdots h_r=\cdots=(\wt{h}_1\cdots \wt{h}_r)\, \kappa(g).\] for new elements $\wt{h}_i\in \Co_i$ (defined by these equalities). 
Let $\kappa_i'=\Ad_{a_i}\circ\,\kappa_i$ for some $a_i\in G$, and put $\Co'_i=r_{a_i^{-1}}(\Co_i)$ and 
$\kappa'=\kappa_r'\cdots\, \kappa_1'$. Then the problem of finding 
$h_i\in \Co_i$ with product $h_1\cdots h_r$ in a prescribed $\kappa$-twisted conjugacy class $\Co$ is equivalent to 
a similar problem for the $\Co_i'$. 

To see this, let $u_1,\ldots,u_{r+1}$ be inductively defined as 
$ u_{i+1}=a_i \kappa_i(u_i) $ with $u_1=e$, and put $a=u_{r+1}$. Then $\kappa'=\Ad_a \circ\,\kappa$, hence $\Co'=r_{a^{-1}}(\Co)$ is a $\kappa'$-twisted 
conjugacy class. A straightforward calculation shows that if $h_i\in \Co_i$ satisfy $h:=h_1\cdots h_r\in \Co$, then the elements 
\[ h_i'=\Ad_{u_i}^{(\kappa_i)}(h_i) \ a_i^{-1}\in \Co_i'\]
have product $h'=h a^{-1}\in \Co'$. 
\end{example}

\subsection{Diagram automorphisms}
Let $G$ be a  compact and simply connected Lie group, with maximal torus $T$ and Weyl group 
$W=N_G(T)/T$. Fix a positive Weyl chamber $\t_+\subset \t$, with corresponding alcove $\Alc\subset \t_+$. The walls of the Weyl chamber are defined by the simple roots $\alpha_1,\ldots,\alpha_l\in \t^*$.  Let $\alpha_i^\vee\in\t$ be the simple coroots, and let $e_i,\,  f_i\in\g^\C$ be the Chevalley generators, for $i=1,\ldots,\,l$. 

Consider an automorphisms of the Dynkin diagram, given by a bijection $i\mapsto i'$ of its set of vertices preserving 
all Cartan integers: $\l\alpha_i,\alpha_j^\vee\r=\l\alpha_{i'},\alpha_{j'}^\vee\r$. 
Any diagram automorphism defines a unique Lie algebra automorphism $\kappa\in\on{Aut}(\g^\C)$ such that 
$\kappa(e_i)= e_{i'},\ 
\kappa(f_i)=f_{i'}$.  This automorphism preserves the real Lie algebra $\g\subset \g^\C$, and exponentiates to the Lie group $G$. We will refer to the resulting  $\kappa\in \Aut(G)$ itself as a diagram automorphism. Every element of $\on{Out}(G)=\on{Aut}(G)/\on{Inn}(G)$ is represented by a unique diagram automorphism, and the resulting splitting $\on{Out}(G)\hra \on{Aut}(G)$ identifies
\[ \on{Aut}(G)=\on{Inn}(G)\rtimes \on{Out}(G).\]
That is, any automorphism of $G$ can be written as $\kappa'=\Ad_a\circ\,\kappa$ with $a\in G$ and $\kappa\in\on{Out}(G)$. 
To understand the orbit structure of $\kappa$-twisted conjugation actions, it hence suffices to consider the case that $\kappa\in \on{Aut}(G)$ is a diagram automorphism. In particular, $\kappa$ preserves $T$, with fixed point set $T^\kappa\subset G^\kappa$. 
Let $\t^\kappa,\ \t_\kappa$ be the kernel and range of $\kappa|_\t-I\colon \t\to \t$. Then  
$\t^\kappa$ is the Lie algebra of $T^\kappa$, and $\t_\kappa=(\t^\kappa)^\perp$ is the orthogonal space in $\t$ (relative to a $W$-invariant metric).  
Put $T_\kappa=\exp(\t_\kappa)$. Then $T=T^\kappa\ T_\kappa$, with finite intersection
\[ T^\kappa\cap T_\kappa.\]
Let $W^\kappa\subset W$ the subgroup of elements $w$ whose action on $\t$ commutes with $\kappa$. For $a\in G$, denote by $G_a$ the stabilizer under the $\kappa$-twisted adjoint action. For Propositions \ref{prop:p1} and \ref{prop:p2} below, see \cite{moh:int},  \cite{spr:twi}, and references therein.

\begin{proposition}\label{prop:p1}
Let $\kappa\in \Aut(G)$ be a diagram automorphism. Then: 
\begin{enumerate}
\item The group $G^\kappa$ contains $T^\kappa$ as a maximal torus, with Weyl group $W^\kappa$. The intersection $\t^\kappa_+=\t^\kappa\cap \t_+$ is a positive Weyl chamber for $G^\kappa$. 
\item Every $\kappa$-twisted conjugacy class $\Co\subset G$ intersects the torus $T^\kappa$ in an orbit of the finite group 
$(T^\kappa\cap T_\kappa)\rtimes W^\kappa$. Here $T^\kappa\cap T_\kappa$ acts by multiplication on $T^\kappa$.
\item For all $a\in T^\kappa$, the stabilizer group $G_a$ under the twisted conjugation action contains $T^\kappa$ as a maximal torus.   
\end{enumerate}
\end{proposition}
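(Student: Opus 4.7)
The plan is to work in the semi-direct product $\hat G = G\rtimes\langle\kappa\rangle$ and reinterpret $\kappa$-twisted conjugation on $G$ as ordinary conjugation by $G$ on the coset $G\kappa\subset\hat G$. Under this identification, twisted conjugacy classes become $G$-orbits in $G\kappa$, and a direct calculation gives $G_a = Z_{\hat G}(a\kappa)\cap G = G^{\Ad_a\circ\kappa}$, i.e.\ the fixed point set of the automorphism $\Ad_a\circ\kappa$. This identification will be the main engine of part (c).

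For part (a), I would invoke the classical analysis of fixed-point subalgebras of diagram automorphisms on simply connected compact groups (cf.\ Kac, Chapter 8). Since $\kappa$ permutes the Chevalley generators $e_i,f_i$, one can choose a $\kappa$- and $W$-invariant inner product on $\g$; the fixed subalgebra $\g^\kappa$ is semisimple with Cartan $\t^\kappa$, and its simple roots are obtained by summing $\kappa$-orbits of simple roots of $\g$ and restricting to $\t^\kappa$. Because $\kappa$ permutes the simple roots, $\t^\kappa\cap\t_+$ is precisely the positive chamber cut out by these averaged simple roots. For the Weyl group identification, each $w\in W^\kappa$ lifts to $N_G(T)^\kappa$ via the Chevalley construction, yielding $W^\kappa\hookrightarrow W(G^\kappa,T^\kappa)$, while conversely any $n\in N_{G^\kappa}(T^\kappa)$ normalizes the centralizer $Z_G(\t^\kappa)\supseteq T$ and so can be adjusted modulo $T^\kappa$ to lie in $N_G(T)^\kappa$, giving the reverse inclusion.

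For part (b), the strategy is in two steps. First, every $x = a\kappa\in G\kappa$ is $G$-conjugate into $T\kappa$: the closure $\overline{\langle x\rangle}\subset\hat G$ is compact abelian with identity component a torus $S\subset G$; conjugating $S$ into $T$ by some $g\in G$ places $gxg^{-1}$ in $N_{\hat G}(T)$, after which a standard Weyl-group argument (analogous to the maximal torus theorem applied to the compact group $\overline{\langle x\rangle}\cdot T$) moves it into $T\kappa$. Second, a direct computation in $\hat G$ shows that conjugation by $t'\in T$ sends $t\kappa$ to $(t\cdot t'\kappa(t')^{-1})\kappa$, so the $T$-orbit of $t\kappa$ equals $(T_\kappa\cdot t)\kappa$ (since the image of $I-\kappa\colon\t\to\t$ is $\t_\kappa$); combined with $T = T^\kappa T_\kappa$, every orbit meets $T^\kappa\kappa$, with residual freedom exactly the finite group $T^\kappa\cap T_\kappa$. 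For the $W^\kappa$ part: if $gt_1\kappa(g^{-1}) = t_2$ with $t_i\in T^\kappa$, then $g$ intertwines the stabilizers $G_{t_1}\to G_{t_2}$, and by (c) both contain $T^\kappa$ as a maximal torus; conjugacy of maximal tori within each compact stabilizer allows $g$ to be modified by elements of $G_{t_1}$ into a representative of $N_G(T^\kappa)^\kappa$, inducing the required element of $W^\kappa$.

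For part (c), the identity $G_a = G^{\Ad_a\circ\kappa}$ reduces the statement to showing that $\t^\kappa$ is a Cartan subalgebra of $\g^{\Ad_a\circ\kappa}$. Containment is immediate, since $a\in T^\kappa$ implies both $\Ad_a$ and $\kappa$ fix $\t^\kappa$ pointwise. Maximality follows by decomposing $\g$ along the root space decomposition of $(\g,\t)$, grouping roots into $\kappa$-orbits, and checking that the zero-weight contribution (in $\t$) to $\ker(\Ad_a\circ\kappa - I)$ is exactly $\t^\kappa$. The main technical obstacle is the Weyl-group step in (b): namely, justifying that the conjugating element can genuinely be arranged in $N_G(T^\kappa)^\kappa$, and verifying that the resulting $W^\kappa$-action correctly parametrizes the $G$-orbits meeting $T^\kappa\kappa$.
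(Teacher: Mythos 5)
The paper itself gives no proof of this proposition; it refers to Mohrdieck--Wendt \cite{moh:int} and Springer \cite{spr:twi}. So your argument must stand on its own. Your overall strategy --- passing to $\hat{G}=G\rtimes\langle\kappa\rangle$, identifying twisted conjugation with ordinary conjugation on the coset $G\kappa$, and reducing to the structure theory of fixed-point subalgebras of diagram automorphisms --- is exactly the right one and matches the literature. One ingredient that you use implicitly in several places and should state explicitly is that $\t^\kappa$ contains a $\g$-regular element (for instance $\rho^\vee=\sum_i\omega_i^\vee$, which is permuted-invariant under any diagram automorphism, with $\l\alpha,\rho^\vee\r=\operatorname{ht}(\alpha)\ne 0$ for all roots). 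This gives $Z_G(\t^\kappa)=T$, which is what actually drives the reverse inclusion $N_{G^\kappa}(T^\kappa)\subseteq N_G(T)^\kappa$ in (a) and the maximality of $\t^\kappa$ in $\g^{\Ad_a\circ\kappa}$ in (c). With that made explicit, parts (a) and (c) are sound.

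The genuine gap is in the Weyl-group step of (b), and your own diagnosis of the ``technical obstacle'' is slightly off: the conjugating element $g$ cannot in general be pushed into $N_G(T^\kappa)^\kappa$. If one could arrange $g\in G^\kappa$ then $\Ad_g^{(\kappa)}(t_1)=g t_1 g^{-1}$, forcing $t_2=w(t_1)$ with no translation part, which would prove the intersection is a $W^\kappa$-orbit rather than a $(T^\kappa\cap T_\kappa)\rtimes W^\kappa$-orbit --- false whenever $T^\kappa\cap T_\kappa\ne\{e\}$. The correct version is subtler: after modifying $g$ by an element of $G_{t_2}$ (conjugacy of maximal tori) one gets $g\in N_G(T^\kappa)\subseteq N_G(T)$, using again $Z_G(T^\kappa)=T$. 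Set $s=g^{-1}\kappa(g)$; from $gt_1\kappa(g)^{-1}=t_2$ and $g\in N_G(T)$ one finds $t_1 s^{-1}=g^{-1}t_2 g\in T$, hence $s\in T$, hence the Weyl element $w_0=gT\in W$ is $\kappa$-fixed, i.e.\ $w_0\in W^\kappa$. It is the \emph{Weyl element}, not $g$, that is $\kappa$-fixed. Now use part (a) to choose a $\kappa$-fixed lift $n\in N_G(T)^\kappa$ of $w_0$ and write $g=nt$ with $t\in T$; then $s=t^{-1}\kappa(t)\in T_\kappa$, while the relation $t_1 s^{-1}=w_0^{-1}(t_2)\in T^\kappa$ forces also $s\in T^\kappa$, so $s\in T^\kappa\cap T_\kappa$ and $t_2=w_0(s)^{-1}\,w_0(t_1)$ is of the required form. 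In other words, the $T^\kappa\cap T_\kappa$ factor is precisely the measure of the failure of $g$ to be $\kappa$-fixable. Your first step in (b) (every element of $G\kappa$ is $\hat G$-conjugate into $T\kappa$) is also stated loosely --- conjugating $S$ into $T$ does not by itself put $gxg^{-1}$ into $N_{\hat G}(T)$ --- but this is standard structure theory of non-connected compact groups and can be cited or filled in; it is the Weyl step above that needs the repair described.
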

 Let $\Lambda=\exp_T^{-1}(e)\subset \t$ be the integral lattice of $T$. Since $G$ is simply connected, it coincides with the coroot lattice of $(G,T)$.  The fixed point set $\Lambda^\kappa\subset \t^\kappa$ is the integral lattice of $T^\kappa$. It is contained in the lattice, 
\[ \Lambda^{(\kappa)}=\exp_{T^\kappa}^{-1}(T^\kappa\cap T_\kappa).\]
% \supseteq \Lambda^\kappa.\]
%

%
\begin{proposition}\label{prop:p2}
There is a unique closed convex polytope $\Alc^{(\kappa)}\subseteq \t^\kappa_+$,  
containing the origin, such that $G_{\exp\xi}=T^\kappa$ for elements $\xi\in \on{int}(\Alc^{(\kappa)})$, and such that the map 
\[ \Alc^{(\kappa)}\xrightarrow{\exp} G\lra G/\Ad_G^{(\kappa)}\]
is a bijection. 
Furthermore, 
\begin{enumerate}
\item 
The cone over $\Alc^{(\kappa)}$ is $\t_+^\kappa$. 
\item 
For each open face $\sigma\subseteq \Alc^{(\kappa)}$, the stabilizer group $G_\sigma:=G_{\exp\xi}$ of elements $\xi\in \sigma$ does not depend on $\xi$, 
The stabilizer groups satisfy $G_\sigma\supseteq G_\tau$ for $\sigma\subseteq\ol{\tau}$. 
\item 
The group $\Waff^{(\kappa)}=\Lambda^{(\kappa)}\rtimes W^\kappa$ is an affine reflection group, generated by reflections across the facets of $\Alc^{(\kappa)}$, and having $\Alc^{(\kappa)}$ as a  fundamental domain. 
\end{enumerate}
\end{proposition}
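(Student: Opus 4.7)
The plan is to identify $\Waff^{(\kappa)}=\Lambda^{(\kappa)}\rtimes W^\kappa$ as an affine reflection group acting on $\t^\kappa$, take $\Alc^{(\kappa)}$ to be the fundamental alcove lying in $\t_+^\kappa$ and containing the origin, and then deduce the claims from the standard structure theory of affine reflection groups. First, by Proposition \ref{prop:p1}(b), every $\kappa$-twisted conjugacy class meets $T^\kappa$ in a single orbit of $(T^\kappa\cap T_\kappa)\rtimes W^\kappa$. Since $\exp_{T^\kappa}\colon \t^\kappa\to T^\kappa$ has kernel $\Lambda^\kappa$ and preimage $\Lambda^{(\kappa)}$ over $T^\kappa\cap T_\kappa$, the fibers of the composite map
\[ \t^\kappa\xra{\exp}T^\kappa\hookrightarrow G\lra G/\Ad_G^{(\kappa)} \]
are precisely the orbits of $\Waff^{(\kappa)}$. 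Hence any fundamental domain for this action will yield the desired bijection.

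Next, I would show that $\Waff^{(\kappa)}$ acts on $\t^\kappa$ as an affine reflection group. The finite part $W^\kappa$ is a reflection group on $\t^\kappa$: for each $\kappa$-orbit $O$ of roots $\beta$ of $G$, the restrictions $\beta|_{\t^\kappa}$ are all proportional to a single functional $\overline{\alpha}\in(\t^\kappa)^*$, and the commuting product $\prod_{\beta\in O}s_\beta$ acts on $\t^\kappa$ as the reflection across $\{\overline{\alpha}=0\}$; the orbits of the simple roots give the walls of the positive chamber $\t_+^\kappa$ of Proposition \ref{prop:p1}(a). To upgrade $W^\kappa\ltimes \Lambda^{(\kappa)}$ to an affine reflection group, one additional affine hyperplane is needed, which is supplied by Kac's theory of twisted affine Kac--Moody algebras: the pair $(\g,\kappa)$ determines a twisted affine algebra of type $X_N^{(m)}$ (with $X_N$ the Cartan type of $\g$ and $m$ the order of $\kappa$), whose affine Weyl group is exactly $\Waff^{(\kappa)}$.

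With this in place, I take $\Alc^{(\kappa)}$ to be the intersection of $\t_+^\kappa$ with the half-space defined by the additional affine reflection. This is the unique fundamental alcove lying in $\t_+^\kappa$ and containing the origin, establishing both uniqueness and the bijection property. The condition $G_{\exp\xi}=T^\kappa$ for $\xi\in \on{int}\Alc^{(\kappa)}$ is verified using \eqref{eq:stab}: at interior points no reflection hyperplane is hit, so the kernel of $\Ad_{\exp\xi}\circ\kappa - I$ on $\g$ is exactly $\t^\kappa$, whence $\g_{\exp\xi}=\t^\kappa$; Proposition \ref{prop:p1}(c) then yields $G_{\exp\xi}=T^\kappa$. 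Item (a) holds because removing the affine wall recovers $\t_+^\kappa$; item (b) is the standard correspondence between faces of the fundamental alcove and stabilizer subgroups in an affine reflection group, combined once more with \eqref{eq:stab}; item (c) is the defining property of the affine Weyl group.

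The main obstacle is the identification of $\Waff^{(\kappa)}$ as an affine Weyl group of type $X_N^{(m)}$, in particular verifying that the translation lattice is $\Lambda^{(\kappa)}$ rather than the smaller $\Lambda^\kappa$. This is most delicate when $\g$ is of type $A_{2n}$ with $\kappa$ of order $2$: then $\Lambda^{(\kappa)}/\Lambda^\kappa$ is nontrivial, the non-reduced affine root system $A_{2n}^{(2)}$ appears, and the contribution of $T^\kappa\cap T_\kappa$ must be matched carefully with the asymmetric affine simple reflection.
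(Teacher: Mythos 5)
The paper does not prove Proposition \ref{prop:p2} at all; it states ``For Propositions \ref{prop:p1} and \ref{prop:p2} below, see \cite{moh:int}, \cite{spr:twi}, and references therein,'' and moves on. So there is no ``paper proof'' to compare against, only the cited literature. Your sketch is a reasonable reconstruction of how those references proceed: reduce, via Proposition \ref{prop:p1}(b) and the kernel/preimage description of $\exp_{T^\kappa}$, to showing that the fibers of $\t^\kappa\to G/\Ad_G^{(\kappa)}$ are $\Waff^{(\kappa)}$-orbits; then identify $\Waff^{(\kappa)}$ with the affine Weyl group of the twisted affine algebra attached to $(\g,\kappa)$ and take $\Alc^{(\kappa)}$ to be the fundamental alcove in $\t^\kappa_+$. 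That first reduction is sound.

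There is, however, a genuine gap in the step where you conclude $G_{\exp\xi}=T^\kappa$ for $\xi\in\on{int}\Alc^{(\kappa)}$. From \eqref{eq:stab} and the fact that no affine wall is hit at an interior point, you correctly get $\g_{\exp\xi}=\t^\kappa$, hence $G_{\exp\xi}^0=T^\kappa$. But Proposition \ref{prop:p1}(c) only tells you that $T^\kappa$ is a maximal torus of $G_{\exp\xi}$; it does not rule out extra components (compare $\SO(2)\subset\mathrm{O}(2)$, where the identity component is a maximal torus yet the group is disconnected). Establishing connectedness of the twisted stabilizer at a regular element is precisely one of the nontrivial points addressed by Springer \cite{spr:twi} (a twisted analogue of Bott's connectedness theorem for $G$ simply connected), and it should be invoked or proved rather than deduced from (c) alone. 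The other serious point, the identification of $\Waff^{(\kappa)}=\Lambda^{(\kappa)}\rtimes W^\kappa$ with the affine Weyl group of type $X_N^{(m)}$ --- and in particular that the translation lattice is $\Lambda^{(\kappa)}$ and not merely $\Lambda^\kappa$, with the $A_{2n}^{(2)}$ case being the delicate one --- you have correctly flagged as the main obstacle, but it is left as a citation to Kac rather than carried out; since the whole proposition rests on this identification (items (a)--(c) all follow formally from it), the sketch would need this filled in to count as a proof.
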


Clearly, if $\kappa=1$ then $\Alc^{(\kappa)}$ is just the usual Weyl alcove, parametrizing the set of (untwisted) conjugacy classes in $G$. Note that in general, $\Lambda^{(\kappa)},\ \Alc^{(\kappa)}$ are different from the coroot lattice and alcove of $(\g^\kappa,\t^\kappa)$. 
The group $\Waff^{(\kappa)}$ is the Weyl group of the twisted affine Lie algebra defined by $\kappa$, see Kac \cite{kac:inf}. 

\subsection{Slices}\label{subsec:slices}
The conjugation action of $G$ on itself has distinguished slices, labeled by the faces of the alcove. We will generalize this fact to twisted conjugation actions. 
\begin{lemma}\label{lem:lemma}
Let $\kappa\in \on{Aut}(G)$, where $G$ is compact. Let $\Co\subset G$ be the $\kappa$-twisted conjugacy class of an element $a\in G^\kappa$. 
Then 
\begin{equation}\label{eq:tanspace}
T_aG=T_a(G_a)\oplus T_a\Co.
\end{equation} 
\end{lemma}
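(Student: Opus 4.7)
The plan is to work in right trivialization $T_aG\cong \g$, $v\mapsto v\cdot a^{-1}$. Under this identification, formula \eqref{eq:tangent} gives $T_a\Co=\on{ran}(A-I)$ for the operator $A:=\Ad_a\circ\,\kappa\in \Aut(\g)$, and \eqref{eq:stab} gives $\g_a=\ker(A-I)$. The hypothesis $a\in G^\kappa$ ensures, directly from the definition of the twisted action, that $a\in G_a$; hence $G_a$ is a closed subgroup containing both $e$ and $a$, right translation by $a^{-1}$ maps $G_a$ to itself, and its derivative identifies $T_aG_a\cdot a^{-1}=T_eG_a=\g_a$. Thus the decomposition \eqref{eq:tanspace} reduces to the purely linear-algebraic identity
\[
\g \;=\; \ker(A-I)\;\oplus\;\on{ran}(A-I).
\]

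To establish this I would equip $\g$ with an inner product that is simultaneously $\Ad(G)$-invariant and $\kappa$-invariant, so that $A=\Ad_a\circ\,\kappa$ becomes an orthogonal operator. For $G$ compact such an inner product exists: on $[\g,\g]$ the negative Killing form is preserved by every automorphism of $\g$, while on $\z(\g)$ one averages a background inner product over the compact closure of $\langle\kappa|_{\z(\g)}\rangle$ in $\on{GL}(\z(\g))$. The potential pitfall that $\kappa$ might act on the center with unbounded iterates does not arise in the settings where the lemma will be used, namely diagram automorphisms of compact simply connected $G$, where $\g$ is semisimple and the Killing form alone suffices.

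With such an invariant inner product in hand, the decomposition is immediate from the orthogonality of $A$: for $v\in\ker(A-I)$ and any $w=Au-u\in\on{ran}(A-I)$,
\[
\langle v,w\rangle=\langle v,Au\rangle-\langle v,u\rangle=\langle A^{-1}v,u\rangle-\langle v,u\rangle=0,
\]
so $\ker(A-I)\perp\on{ran}(A-I)$, and rank--nullity then forces equality of dimensions, giving the orthogonal direct sum. The step I expect to require the most care is the second one — producing the simultaneously $\Ad$-invariant and $\kappa$-invariant inner product, or equivalently arranging for $1$ to be a semisimple eigenvalue of $A$ — since once $A$ is orthogonal the remainder of the argument is standard linear algebra.
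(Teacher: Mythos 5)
Your argument is essentially the paper's own: pass to right trivialization, identify $\g_a=\ker(A-I)$ and $T_a\Co=\on{ran}(A-I)$ for $A=\Ad_a\circ\kappa$, use $\kappa(a)=a$ to get $T_aG_a=\g_a$, and conclude via an $\Ad$- and $\kappa$-invariant inner product making $A$ orthogonal (the paper phrases this as $T_a\Co=\g_a^\perp$; you spell out the orthogonality computation and then invoke rank–nullity, which is the same content). The one place you go beyond the paper is in worrying whether a simultaneously $\Ad(G)$- and $\kappa$-invariant inner product actually exists; the paper simply writes ``pick an $\Aut(G)$-invariant inner product on $\g$'' without comment. Your caution is warranted: for $G$ a torus and $\kappa$ given by a unipotent integer matrix such as $\left(\begin{smallmatrix}1&1\\0&1\end{smallmatrix}\right)$, no such inner product exists and the decomposition \eqref{eq:tanspace} genuinely fails at $a=e$ (there $\ker(A-I)=\on{ran}(A-I)$), so the lemma as literally stated requires an extra hypothesis — though, as you note, $\g$ is semisimple in every case the paper actually uses it, where $\Aut(\g)$ preserves the Killing form and the issue disappears.
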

\begin{proof}
Pick an $\on{Aut}(G)$-invariant inner product on $\g$, defining a bi-invariant Riemannian metric on the Lie group $G$ which is also invariant under $\kappa$.  Since $\kappa(a)=a$, we have $a\in G_a$, and we obtain $T_aG_a=\g_a$ in right trivialization. On the other hand,  
by \eqref{eq:stab}  and \eqref{eq:tangent} we have
$T_a\Co=\on{ran}(\Ad_a\circ\, \kappa-I)=\g_a^\perp$ in right trivialization. Since the two spaces are orthogonal, the Lemma follows. 
\end{proof}
Using again that $\kappa(a)=a$, the twisted conjugation action of $a$ on $G$ restricts to the usual conjugation action on $G_a$. In particular, $G_a$ is a $\Ad^{(\kappa)}_a$-invariant 
submanifold of $G$. The Lemma shows that any sufficiently small invariant open neighborhood of $a$ in $G_a$ is a 
slice for the twisted conjugation action.

If $G$ is also simply connected, and $\kappa$ is a diagram automorphism, there is a specific `largest' slice, as follows. 
For any face $\sigma\subset \Alc^{(\kappa)}$, let $\Alc^{(\kappa)}_\sigma$ be the relatively open subset 
of $\Alc^{(\kappa)}$ given as the union of faces $\tau\subset \Alc^{(\kappa)}$ such that $\sigma\subseteq\ol{\tau}$. Put 
\begin{equation}\label{eq:usigma}
 U_\sigma=\Ad_{G_\sigma}^{(\kappa)}\exp(\Alc_\sigma^{(\kappa)}),\end{equation}
a subset of $G_\sigma\subset G$. 
\begin{proposition}\label{prop:slice}
The subset $U_\sigma\subset G_\sigma$ is open, and invariant under the twisted conjugation action of $G_\sigma$. The map 
\begin{equation}\label{eq:actionmap}
G\times_{G_\sigma} U_\sigma\to G,\ [(g,a)]\mapsto \Ad_g^{(\kappa)}a
\end{equation}
is an embedding as an open subset of $G$. That is, $U_\sigma$ is a slice for the twisted conjugation action. 
\end{proposition}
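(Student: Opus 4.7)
The plan is to verify four items: (i) $U_\sigma \subseteq G_\sigma$ with its $G_\sigma$-invariance, (ii) openness of $U_\sigma$ in $G_\sigma$, (iii) injectivity of the action map \eqref{eq:actionmap}, (iv) bijectivity of its differential everywhere. Item (i) is essentially by inspection: $\exp(\Alc_\sigma^{(\kappa)}) \subseteq T^\kappa \subseteq G_\sigma$ by Proposition \ref{prop:p1}(c), and $G_\sigma$ is $\kappa$-stable (for $g \in G_\sigma$ and $\xi_0 \in \sigma$, apply $\kappa$ to $g\exp(\xi_0)\kappa(g^{-1}) = \exp(\xi_0)$ and use $\kappa(\xi_0) = \xi_0$ to conclude $\kappa(g) \in G_\sigma$); hence $G_\sigma$ is closed under its own $\kappa$-twisted conjugation. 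The main technical obstacle is (ii).

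To prove (ii), fix $a = \exp(\xi) \in \exp(\Alc_\sigma^{(\kappa)})$ with $\xi$ in an open face $\tau$, so that $G_a = G_\tau \subseteq G_\sigma$. Applying Lemma \ref{lem:lemma} to $G_\sigma$ with its $\kappa$-twisted conjugation action at $a$ yields $T_aG_\sigma = T_aG_a \oplus T_a(\Ad_{G_\sigma}^{(\kappa)}a)$; by the standard tube theorem, a $G_\sigma$-invariant neighborhood of the $G_\sigma$-orbit of $a$ in $G_\sigma$ is of the form $G_\sigma \times_{G_a} V$ for some $G_a$-invariant neighborhood $V \subseteq G_a$ of $a$. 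It suffices to show $V$ may be chosen inside $U_\sigma$. For this I invoke the twisted analogue of Cartan's maximal torus theorem for the compact Lie group $G_a$ (whose maximal torus is the $\kappa$-stable $T^\kappa$ by Proposition \ref{prop:p1}(c)): every element of $G_a$ is $\kappa$-twisted conjugate by $G_a$ to an element of $T^\kappa$. Hence any $y \in G_a$ close to $a$ has the form $\Ad_h^{(\kappa)}\exp(\xi')$ with $h \in G_a \subseteq G_\sigma$ and $\xi' \in \t^\kappa$ close to $\xi$; reflections in $\Waff^{(\kappa)}$ across walls through $\sigma$ (represented by elements of $G_\sigma$) bring $\xi'$ into $\Alc^{(\kappa)}$, and relative openness of $\Alc_\sigma^{(\kappa)}$ in $\Alc^{(\kappa)}$ together with proximity to $\xi \in \tau$ forces $\xi' \in \Alc_\sigma^{(\kappa)}$. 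The main subtlety here is the twisted Cartan statement for $G_a$, which I would obtain by reinterpreting $\kappa$-twisted conjugation as ordinary conjugation within the semidirect product $G_a \rtimes \langle\kappa\rangle$ and invoking the classical Cartan theorem there.

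For (iii): if $\Ad_g^{(\kappa)}(a) = \Ad_{g'}^{(\kappa)}(a')$ with $a = \Ad_h^{(\kappa)}\exp(\xi)$ and $a' = \Ad_{h'}^{(\kappa)}\exp(\xi')$ in $U_\sigma$, then $\exp(\xi)$ and $\exp(\xi')$ are $G$-twisted conjugate, so Proposition \ref{prop:p2} forces $\xi = \xi'$; a short calculation using $(gh)^{-1}g'h' \in G_{\exp(\xi)} \subseteq G_\sigma$ then gives $[g, a] = [g', a']$ in $G \times_{G_\sigma} U_\sigma$. For (iv): at $[e, a]$ in right trivialization, the differential sends $(X, Y) \in \g \oplus \g_\sigma$ to $(I - \Ad_a \circ \kappa)X + \Ad_a Y$, modulo the relation $(-\eta, (I - \Ad_a \circ \kappa)\eta) \sim 0$ for $\eta \in \g_\sigma$. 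Using $\on{ran}(I - \Ad_a \circ \kappa) = \g_a^\perp$ (for any $\Aut(G)$-invariant inner product on $\g$) together with $\g_a \subseteq \g_\sigma = \Ad_a\g_\sigma$, the image contains $\g_a^\perp + \g_a = \g$; a dimension count confirms bijectivity, and combined with injectivity this yields that \eqref{eq:actionmap} is an open embedding.
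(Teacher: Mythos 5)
Your proposal is correct in outline and reaches the same conclusion, but for the crucial openness step it takes a genuinely different and somewhat more laborious route than the paper. The paper's proof hinges on a single identity \eqref{eq:c}: for $g\in G_\sigma$, $\zeta\in\sigma$, $c=\exp\zeta$ and $\xi\in\t^\kappa$, one has $\Ad_g^{(\kappa)}\exp(\xi)=\Ad_g(\exp(\xi-\zeta))\,c$. In other words, because $\kappa|_{G_\sigma}=\Ad_{c^{-1}}$ is inner, $\kappa$-twisted conjugation on $G_\sigma$ is \emph{ordinary} conjugation composed with right translation by $c$. This immediately converts $U_\sigma$ into a translate $U'_\sigma\,c$ of an untwisted conjugation slice in $G_\sigma$, identifies $\Alc_\sigma^{(\kappa)}-\zeta$ with a relatively open piece of a $(G_\sigma,T^\kappa)$-alcove, and reduces openness to the classical (untwisted) slice theorem in one stroke. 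You instead apply the tube theorem at a point $a=\exp\xi\in\exp(\Alc^{(\kappa)}_\sigma)$, using Lemma \ref{lem:lemma} to get the slice direction $T_aG_a$, and then must show a small $G_a$-invariant neighborhood $V\subseteq G_a$ lies in $U_\sigma$; for this you invoke a twisted Cartan theorem for $G_a$ plus some affine Weyl group bookkeeping. This works, but two of the ingredients are stated more confidently than they are justified. First, the claim that every element of $G_a$ is $G_a$-twisted conjugate into $T^\kappa$ cannot be obtained by ``invoking the classical Cartan theorem'' in $G_a\rtimes\langle\kappa\rangle$: that group is disconnected, and the classical theorem is a statement about connected compact groups. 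What is needed is the non-connected Cartan theorem (e.g.\ \cite{moh:int}), or---more simply and in keeping with the paper---the observation that $\kappa|_{G_a}=\Ad_{a^{-1}}$ is inner, so that twisted conjugation in $G_a$ is $h\mapsto g(ha^{-1})g^{-1}a$ and the ordinary Cartan theorem applies after the shift by $a$. Second, ``reflections in $\Waff^{(\kappa)}$ across walls through $\sigma$ are represented by elements of $G_\sigma$'' is true, but is itself part of the slice machinery being established and deserves at least a pointer; in the paper's reduction it is absorbed into the fact that $\Alc_\sigma^{(\kappa)}-\zeta$ sits inside an alcove of $(G_\sigma,T^\kappa)$. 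On a minor note, in your differential computation for (iv) the $Y$-term should appear as $Y$ rather than $\Ad_aY$ in right trivialization, but this does not affect the range and the dimension count goes through. For (iii) and (iv) your argument essentially coincides with the paper's (both rest on Proposition \ref{prop:p2} for injectivity and on Lemma \ref{lem:lemma} for surjectivity of the differential). Overall: correct idea, slightly heavier machinery, and a couple of background facts invoked without adequate justification---all of which evaporate once one notices the inner-automorphism reduction the paper uses.
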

\begin{proof}
Pick $\zeta\in\sigma$, and put $c=\exp\zeta$ so that $G_c=G_\sigma$. 
%Since $\Ad_c\circ\,\kappa$ fixes $G_\sigma$, the 
%restriction of $\kappa$ to $G_\sigma$ is an inner automorphism $\Ad_{c^{-1}}$. 
%Consequently, 
For all $\xi\in \t^\kappa$ and $g\in G_\sigma$, 
\begin{equation}\label{eq:c}
 \Ad_g^{(\kappa)}\exp(\xi)=
\Ad_g^{(\kappa)}(\exp(\xi-\zeta)c)=
\Ad_g(\exp(\xi-\zeta))\ c.\end{equation}
% 
%Right multiplication by $c$ intertwines the twisted and untwisted adjoint actions of $G_\sigma$ on itself: For $g,h\in G_\sigma$ we have\[ \Ad^{(\kappa)}_g(hc)=\Ad_g(h) \Ad^{(\kappa)}_g(c)=\Ad_g(h) c.\]
%
It follows that $U_\sigma=U_\sigma'\,c$ where 
\[ U_\sigma'=\Ad_{G_\sigma}\exp\big( \Alc_\sigma^{(\kappa)}-\zeta\big).\]
Equation \eqref{eq:c} also shows that for $\xi\in\Alc_\sigma^{(\kappa)}\subset \t^\kappa$, the stabilizer of $\exp(\xi)$ under the twisted conjugation action of $G$ (which lies in $G_\sigma$, by definition of $\Alc_\sigma^{(\kappa)}$) equals the stabilizer of $\exp(\xi-\zeta)$ under the usual conjugation action of $G_\sigma$. 
Consequently, $\Alc_\sigma^{(\kappa)}-\zeta$ is a relatively open subset of an alcove of $(G_\sigma,T^\kappa)$. It follows that $U_\sigma'$ is open in $G_\sigma$, and hence $U_\sigma$ is open in $G_\sigma$. 

We next show that the map \eqref{eq:actionmap} is injective. Thus suppose $\Ad_g^{(\kappa)}a=\Ad_{g'}^{(\kappa)}a'$, where $a,a'\in U_\sigma$ and $g,g'\in G$. Since $a,a'$ are in the 
same twisted conjugacy class, there is a unique $\xi\in \Alc_\sigma$ and elements $h,h'\in G_\sigma$ such that 
\[ a=\Ad_h^{(\kappa)} \exp(\xi),\ \ \ \ a'=\Ad_{h'}^{(\kappa)} \exp(\xi).\]
We thus obtain
\[ \Ad^{(\kappa)}_{gh} \exp(\xi)=\Ad^{(\kappa)}_{g'h'} \exp(\xi),\]
which implies $ghk=g'h'$ for some $k\in G_{\exp\xi}\subset G_\sigma$. 
Setting $u=h' k^{-1} h^{-1}\in G_\sigma$, we obtain 
$g'=g u^{-1}$, while $a'=\Ad_u^{(\kappa)} a$. That is, $[(g,a)]=[(g',a')]$. 

To complete the proof, it suffices to show that \eqref{eq:actionmap} has surjective differential. By equivariance, it is enough to verify this at elements $[(e,a)]$ with  $a\in \exp(\Alc_\sig^{(\kappa)})\subset T^\kappa$. The range of the differential of \eqref{eq:actionmap} at such a point contains $T_aG_\sigma+T_a\Co$. Since $G_a\subset G_\sigma$, hence 
$T_a G_a\subset T_aG_\sigma$,  Lemma \ref{lem:lemma} shows that this is all of $T_aG$. 
\end{proof}

\section{q-Hamiltonian spaces}
Let $G$ be a Lie group, with an invariant inner product $\cdot $ on its Lie algebra $\g$, and let $\eta\in \Om^3(G)$ be the bi-invariant closed 3-form 
\[ \eta=\f{1}{12}\theta^L\cdot [\theta^L,\theta^L]
=\f{1}{12}\theta^R\cdot [\theta^R,\theta^R]
\]
where $\theta^L,\theta^R\in \Om^1(G,\g)$ are the left, right invariant Maurer-Cartan forms. 
Suppose $\kappa\in \Aut(G)$ is an automorphism. It will be convenient to denote the group $G$, viewed as a $G$-manifold under $\kappa$-twisted conjugation, by $G\kappa$. 

\subsection{$G\kappa$-valued moment maps}
A \emph{q-Hamiltonian $G$-space with $G\kappa$-valued moment map} is a $G$-manifold $M$, together with an $G$-invariant 2-form $\om$ 
and a $G$-equivariant smooth map $\Phi\colon M\to G\kappa$. These are required to satisfy the following axioms: 
\begin{enumerate}
\item $\d\om=-\Phi^*\eta$, 
\item $\iota(\xi_M)\om=-\hh\Phi^*(\kappa(\xi)\cdot \theta^L+\xi\cdot \theta^R)$, 
\item $\ker(\om)\cap \ker(T\Phi)=0$. 
\end{enumerate}
These axioms generalize the $G$-valued moment maps from \cite{al:mom}.  
In terms of equivariant de Rham forms, the first two properties may be combined into a single condition 
$\d_G\omega=-\Phi^*\eta_G^{(\kappa)}$, where 
\[ \eta_G^{(\kappa)}(\xi)=\eta-\hh(\kappa(\xi)\cdot \theta^L+\xi\cdot \theta^R).\]
is a closed equivariant 3-form on $G\kappa$. 

\begin{example}[Twisted conjugacy classes]
$\kappa$-twisted conjugacy classes $\Co\subset G$  are q-Hamiltonian $G$-spaces, with the
$G\kappa$-valued moment map given as the inclusion.  The 2-form is uniquely determined by the moment map condition (b), and is given by 
\[ \om(\xi_\Co,\tau_\Co)=\hh((\Ad_\phi\circ\,\kappa)-(\Ad_\phi\circ\,\kappa)^{-1})\xi\cdot\tau.\]
Note that the twisted conjugacy classes can be odd-dimensional. For example, in the case of $G=\SU(3)$ with $\kappa$ given by complex conjugation, the generic stabilizer under twisted conjugation is a circle, and hence the generic twisted conjugacy classes are 7-dimensional. 
\end{example}

\begin{example}[Twisted moduli spaces]
These are associated to any compact oriented surface with boundary, with marked points on the boundary components, with a prescribed homomorphism from the fundamental groupoid into $\on{Aut}(G)$. 
This will be discussed in Section \ref{subsec:twistmod} below. 
\end{example}

\begin{example}\label{ex:fusion}
Further examples are created by \emph{fusion}:
Suppose $M_i$ for $i=1,2$ are two  q-Hamiltonian $G$-spaces with $G\kappa_i$-valued moment map. Then 
$M_1\times M_2$ with the new $G$-action $g.(m_1,m_2)=(g.m_1,\kappa_1(g).m_2)$ and the 2-form  
\[ \om=\om_1+\om_2+\hh \Phi_1^*\theta^L\cdot \Phi_2^*\theta^R\]
becomes a q-Hamiltonian $G$-space with $G\,\kappa_2\kappa_1$-valued 
 moment map $\Phi_1\Phi_2$. 
 %
 %In fact we have \[ \Mult^*\eta_G^{(\kappa_2\kappa_1)}(\xi)=\pr_1^*\eta_G^{(\kappa_1)}(\xi)+\pr_2^*\eta_G^{(\kappa_2)}(\kappa_1(\xi))+\d_G (\hh\pr_1^*\theta^L\cdot\pr_2^*\theta^R),\]
%where $\d_G$ is the equivariant differential for the  $G$-action on $G\times G$ given as \[ g.(h_1,h_2)=(\Ad_g^{(\kappa_1)}h_1,\ \Ad_{\kappa_1(g)}^{(\kappa_2)}h_2).\] 
Properties (a) and (b) may be verified directly; for property (c) it is best to use the Dirac-geometric approach as in Remark \ref{rem:dirac}.
 
For example, if $\Co$ is a $\kappa$-twisted conjugacy class in $G$, and $M$ is a q-Hamiltonian $G$-space with (non-twisted) $G$-valued moment map, then the fusion product $M\times \Co$ is a q-Hamiltonian $G$-space with $G\kappa$-valued moment map. 
Also, if $\Co_i\subset G$ are $\kappa_i$-twisted conjugacy classes, for $i=1,\ldots,r$,  then their fusion product 
$\Co_1\times\cdots \times\Co_r$ is a q-Hamiltonian space with $G\kappa$-valued moment map, where $\kappa=\kappa_r\cdots \kappa_1$. See example \ref{ex:conjugacy}.
\end{example}

\begin{remark}
Let $L^{(\kappa)}G$ be the twisted loop group, consisting of paths $g\colon \R\to G$ with the property that $g(t+1)=\kappa(g(t))$ for all $t$. There is a notion of Hamiltonian  $L^{(\kappa)}G$-space
generalizing that of a Hamiltonian $LG$-space \cite{me:lo}, and by the same proof as for $\kappa=1$ \cite{al:mom} one 
sees that there is a 1-1 correspondence between Hamiltonian  $L^{(\kappa)}G$-spaces with proper moment maps and $\kappa$-twisted q-Hamiltonian $G$-spaces.  
\end{remark}

\begin{remark} \label{rem:dirac}
The definition of $G\kappa$-valued moment maps has a Dirac-geometric interpretation, similar to 
\cite{bur:di} and \cite{al:pur}. Using the notation from \cite{al:pur}, let $\mathbb{A}=\mathbb{T} G_\eta$ be the  standard Courant algebroid 
over $G$, with the Courant bracket twisted by the closed 3-form $\eta$. 
It has a canonical trivialization $\mathbb{A}=G\times (\ol{\g}\oplus \g)$, where $\ol{\g}$ stands for $\g$ with the opposite metric. 
Any Lagrangian Lie subalgebra $\mf{s}\subset \ol{\g}\oplus \g$ defines a Dirac structure
$E_{\mf{s}}=G\times \mf{s}\subset \mathbb{A}$. Taking $\mf{s}$ to be the diagonal, one obtains the Cartan-Dirac structure $E_\Delta=E$. Taking $\mf{s}=\{(\xi,\kappa(\xi)|\ \xi\in\g\}$ for  $\mf{\kappa}\in\Aut(G)$, one obtains a Dirac structure
$E_{\mf{s}}=E^{(\kappa)}$ generalizing the Cartan-Dirac structure. As a Lie algebroid, it is the action Lie algebroid for the 
$\kappa$-twisted conjugation action. For a q-Hamiltonian space $(M,\om,\Phi)$ with $G\kappa$-valued moment map, the pair  $(\Phi,\omega)$ defines a full morphism of Manin pairs, $(\mathbb{T} M,TM)\da (\mathbb{A},E^{(\kappa)})$. 
Conversely, such a morphism defines a $\g$-action on $M$ for which the underlying map $\Phi\colon M\to G$ is equivariant, and it also defines an invariant 2-form on $M$ satisfying the axioms above.  The Dirac-geometric approach explains many of the properties of $G\kappa$-valued moment maps; for example the fusion construction finds a conceptual explanation in terms of a Dirac morphism 
\[ (\Mult_G,\ \varsigma)\colon (\mathbb{A},E^{(\kappa_1)})\times  (\mathbb{A},E^{(\kappa_2)})
\to (\mathbb{A},E^{(\kappa_2\kappa_1)})\]
with the 2-form $\varsigma=\hh \pr_1^*\theta^L\cdot \pr_2^*\theta^R$. See \cite[Section 4.4]{al:pur}. 
%Let us finally remark that $E^{(\kappa)}$ is invariant under the subgroup $\{(g,\kappa(g)\}|\ g\in G\}$ of $G\times G$, and has an invariant Lagrangian complement $F^{(\kappa)}=\{(\xi,-\kappa(\xi))|\ \xi\in\g$. This complement is described by a distinguished pure spinor, resulting in canonical $G$-invariant volume forms on q-Hamiltonian spaces with $G\kappa$-valued moment maps. 
\end{remark}

\subsection{Twisted moduli spaces}\label{subsec:twistmod}
Let $\Sigma=\Sigma_h^r$ be a compact, connected, oriented surface of genus $h$ with $r>0$ boundary components,  and let $\V=\{x_1,\ldots,x_r\}$ be a collection of  base points on the boundary components, 
$x_i\in  (\partial\Sigma)_i\cong S^1$. Let 
\[ \pi_1(\Sigma,\V)\rightrightarrows\V\] 
denote the fundamental groupoid, consisting of homotopy classes $\lambda$ of paths for which both the initial point 
$\sz(\lambda)$ and the end point $\tz(\lambda)$ are in $\ca{V}$. Suppose we are given a groupoid homomorphism (`twist') 
\[ \kappa\in \Hom\big(\pi_1(\Sigma,\V),\on{Aut}(G)\big).\]
Such a $\kappa$ may be obtained by assigning elements of $\on{Aut}(G)$ to  a system of free generators of the fundamental groupoid, and extending by the homomorphism property. Let 
\begin{equation} \label{eq:homg}M=\on{Hom}_\kappa\big(\pi_1(\Sigma,\V),G\big) \end{equation}
be the space of $\kappa$-twisted homomorphisms, consisting of maps 
$\lambda\mapsto \phi_\lambda$ such that 
\[ \phi_{\lambda_1\circ \lambda_2}=\phi_{\lambda_1}\ \kappa_{\lambda_1}(\phi_{\lambda_2})\]
whenever  $\sz(\lambda_1)=\tz(\lambda_2)$. (The space $M$ may be regarded as a certain moduli space of flat connections.) \footnote{Alternatively,  $\on{Hom}_\kappa$ is the lift of $\kappa$ to the space $\wt{M}=\Hom\big(\pi_1(\Sigma,\V),G\rtimes \on{Aut}(G)\big)$.} 
The group $\on{Map}(\V,G)=G\times \cdots \times G$ act on the space \eqref{eq:homg} as 
\[ (g.\phi)_\lambda=g_{\tz(\lambda)}\ \phi_\lambda\ \kappa_\lambda(g_{\sz(\lambda)}^{-1}).\]
Let $\kappa_1,\ldots,\kappa_r\in \on{Aut}(G)$ be the values of  $\kappa$ on the oriented boundary loops $\lambda_1,\ldots,\lambda_r$. Then $M$ is  a  q-Hamiltonian $G^r$-space, with a $G\kappa_1\times \cdots \times G\kappa_r$-valued moment map $\Phi$ given by evaluation on boundary loops. We won't describe the 2-form here, since for the case that $\kappa$ takes values in diagram automorphisms it may be regarded as a component of the moduli space of flat
$G\rtimes \on{Out}(G)$-bundles -- see Section \ref{sec:changing} below. 

\begin{remark}
This construction also gives new examples of \emph{non-twisted} q-Hamiltonian spaces. 
For example, take $\Sigma=\Sigma_1^1$ be the surface of genus $1$ with one boundary component. Its fundamental group(oid) has free generators $\alpha,\beta$, with the boundary loop given as $\alpha\beta\alpha^{-1}\beta^{-1}$. Attach an automorphism $\sigma\in 
\Aut(G)$ to $\beta$, and $1$ to $\alpha$, and extend to a homomorphism $\kappa$ as above. 
Then the corresponding $M$ is $G\times G$, with elements $(a,b)$ corresponding to holonomies along $\alpha,\beta$. The group $G$ acts on $a$ by conjugation and on $b$ by $\kappa$-twisted conjugation. The boundary holonomy is a $G$-valued moment map
 \[ (a,b)\mapsto a b \kappa(a^{-1})\kappa(b^{-1}),\]
a twisted group commutator.
\end{remark}
\begin{remark}
Let $K$ be a disconnected group with identity component $G=K_0$. The space $\Hom(\pi_1(\Sigma,\V),K)$ is a moduli space of flat $K$-bundles over $\Sigma$, with framings at the base points. This space is disconnected, in general. The conjugation action of $K$ on its identity component defines a group homomorphism $K\to \on{Aut}(G)$. Hence, any 
element $x\in \Hom(\pi_1(\Sigma,\V),K)$ determines an element $\kappa\in \Hom(\pi_1(\Sigma,\V),\on{Aut}(G))$, and the connected component containing $x$ is identified with a component of $\Hom_\kappa(\pi_1(\Sigma,\V),G)$.
\end{remark}

The example giving rise to the convex polytope in Theorem \ref{th:convexity} is 
obtained from the $r$-holed sphere $\Sig=\Sig_0^r$. Here $\pi_1(\Sigma,\V)$ 
is freely generated by $\lambda_1,\ldots,\lambda_{r-1}$, represented by oriented
boundary loops based at $x_1,\ldots,x_{r-1}$, together with $\mu_1,\ldots,\mu_{r-1}$ represented by  non-intersecting paths connecting these points to $x_r$. The element $\lambda_r$ represented by the remaining boundary loop satisfies 
\begin{equation}\label{eq:lambdas}
 \prod_{i=1}^{r-1}\mu_i\lambda_i\mu_i^{-1}=\lambda_r^{-1}.
 \end{equation}
Given $\kappa\in \Hom(\pi_1(\Sigma,\V),\on{Aut}(G))$, we denote by $\kappa_i$ the images of the $\lambda_i$'s, and by $\sigma_i$ the images of the $\mu_i$'s. 
Then 
\begin{equation}\label{eq:kappas}
 \prod_{i=1}^{r-1}\sigma_i\kappa_i\sigma_i^{-1}=\kappa_r^{-1}.
 \end{equation}
We find $\Hom_\kappa(\pi_1(\Sigma,\V),G)=G^{2r-2}$, consisting of tuples
$(d_1,\ldots,d_{r-1},a_1,\ldots,a_{r-1})$, where $d_i$ are holonomies attached to the $\lambda_i$, and $a_i$ are attached to the $\mu_i$. The holonomy $d_r$ 
around the $r$-th boundary loop is determined from
\begin{equation}\label{eq:holes}
 \prod_{i=1}^{r-1} (a_i,\sigma_i)(d_i,\kappa_i)(a_i,\sigma_i)^{-1}=(d_r,\kappa_r)^{-1}.
 \end{equation}

\begin{lemma}\label{lem:modspace0r}
Let $\kappa_1,\ldots,\kappa_r$ be holonomies attached to the boundaries of $\Sigma_0^r$, with 
$\kappa_r\kappa_{r-1}\cdots \kappa_1=1$. Then there is an extension to a homomorphism $\kappa\in\Hom(\pi_1(\Sigma,\ca{V}),\on{Aut}(G))$, in such a way that the moment map image of $M=\on{Hom}_\kappa(\pi_1(\Sigma,\ca{V}),\,G)$
consists of all $(d_1,\ldots,d_r)\in G^r$ for which there
exists $(g_1,\ldots,g_r)$ with $g_i\in \Ad_G^{(\kappa_i)}(d_i)$ and 
$\prod_{i=1}^r g_i=e$. 
\end{lemma}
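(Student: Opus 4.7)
The plan is to construct an explicit extension $\kappa$ of the boundary data and to verify both inclusions of the moment map image by direct calculation in the semidirect product $G\rtimes\Aut(G)$.

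First I would construct the extension. Set $\sigma_1=1$ and $\sigma_i=\kappa_1^{-1}\kappa_2^{-1}\cdots\kappa_{i-1}^{-1}$ for $2\le i\le r-1$, and put $\kappa_i':=\sigma_i\kappa_i\sigma_i^{-1}$. A short induction gives the telescoping identity $\kappa_1'\kappa_2'\cdots\kappa_j'=\kappa_j\kappa_{j-1}\cdots\kappa_1$ for every $j\le r-1$; taking $j=r-1$ and invoking the hypothesis $\kappa_r\kappa_{r-1}\cdots\kappa_1=1$ yields $\prod_{i=1}^{r-1}\kappa_i'=\kappa_r^{-1}$, which is precisely \eqref{eq:kappas}. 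Assigning $\kappa_i$ to $\lambda_i$ and $\sigma_i$ to $\mu_i$ therefore extends to a groupoid homomorphism $\kappa\in\Hom(\pi_1(\Sigma,\V),\Aut(G))$ with the required boundary values.

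Next, for the forward inclusion, expand the $i$-th conjugated factor of \eqref{eq:holes} in $G\rtimes\Aut(G)$ as $(a_i,\sigma_i)(d_i,\kappa_i)(a_i,\sigma_i)^{-1}=(g_i^{(0)},\kappa_i')$ with $g_i^{(0)}:=a_i\,\sigma_i(d_i)\,\kappa_i'(a_i^{-1})$; the $G$-component of the whole product is then $\prod_{i=1}^{r-1}\lambda_i(g_i^{(0)})$, where $\lambda_i:=\kappa_1'\cdots\kappa_{i-1}'=\sigma_i^{-1}$ by the telescoping. Using $\lambda_i\kappa_i'\lambda_i^{-1}=\kappa_i$, one computes
\[ h_i:=\lambda_i(g_i^{(0)})=\lambda_i(a_i)\,d_i\,\kappa_i(\lambda_i(a_i)^{-1})=\Ad_{\lambda_i(a_i)}^{(\kappa_i)}(d_i)\ \in\ \Ad_G^{(\kappa_i)}(d_i), \]
so \eqref{eq:holes} reduces to $\prod_{i=1}^{r-1}h_i=\kappa_r^{-1}(d_r^{-1})$. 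Setting $h_r:=\kappa_r^{-1}(d_r)$, a one-line check gives $h_r=\Ad^{(\kappa_r)}_{\kappa_r^{-1}(d_r)}(d_r)\in\Ad_G^{(\kappa_r)}(d_r)$, and $\prod_{i=1}^r h_i=e$ as required.

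For the converse, given $g_i\in\Ad_G^{(\kappa_i)}(d_i)$ with $\prod_{i=1}^r g_i=e$, write $g_i=\Ad_{b_i}^{(\kappa_i)}(d_i)$ for $i<r$ and set $a_i:=\lambda_i^{-1}(b_i)$, so that $h_i=g_i$ in the notation of the previous paragraph. The identity $\prod_{i=1}^{r-1}h_i=\kappa_r^{-1}(d_r^{-1})$ combined with $\prod_{i=1}^{r-1}g_i=g_r^{-1}$ forces the moment map image of this point of $M$ to be $(d_1,\ldots,d_{r-1},\kappa_r(g_r))$. Since $\kappa_r(g_r)=\Ad_{g_r^{-1}}^{(\kappa_r)}(g_r)$ lies in the $\kappa_r$-twisted conjugacy class of $g_r$, hence of $d_r$, a suitable gauge transformation at the base point $x_r$, which affects only the last moment map component, then produces a preimage of $(d_1,\ldots,d_r)$ itself.

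The main obstacle is the semidirect-product bookkeeping relating $a_i,d_i,\sigma_i,\kappa_i$; the explicit choice of $\sigma_i$ is what makes $\lambda_i=\sigma_i^{-1}$ and identifies $h_i$ cleanly as a $\kappa_i$-twisted conjugate of $d_i$. A more delicate point, visible only in the converse, is that the $r$-th moment map component emerges as $\kappa_r(g_r)$ rather than $g_r$; this is harmless because $\kappa_r(g_r)$ automatically lies in the same $\kappa_r$-twisted conjugacy class.
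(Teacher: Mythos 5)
Your argument is correct and follows the same route as the paper's proof: the same choice of $\sigma_i=\kappa_1^{-1}\cdots\kappa_{i-1}^{-1}$ to realize the constraint \eqref{eq:kappas}, the same identification of each twisted-conjugated factor with an element $h_i\in\Ad_G^{(\kappa_i)}(d_i)$, and the same reading-off of the moment map from the relation \eqref{eq:holes}. The main differences are ones of care rather than of method. First, your bookkeeping yields $\prod_{i=1}^{r-1}h_i=\kappa_r^{-1}(d_r^{-1})$, so the natural $r$-th factor is $h_r=\kappa_r^{-1}(d_r)$; the paper instead writes the relation directly as $\prod_{i=1}^{r}a_i'd_i\kappa_i((a_i')^{-1})=e$ with $a_r'=e$, i.e.\ $g_r=d_r$. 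The discrepancy is a $\kappa_r$-twist, which is immaterial for the statement because $\kappa_r^{-1}(d_r)=\Ad^{(\kappa_r)}_{\kappa_r^{-1}(d_r)}(d_r)$ and $d_r$ lie in the same twisted conjugacy class; still, your version is the one that falls out of the semidirect-product calculation as you set it up. Second, the paper states the identification of the moment map image without explicitly arguing both inclusions, while you verify the converse and correctly observe that the naive choice of $a_i$ lands on $(d_1,\ldots,d_{r-1},\kappa_r(g_r))$, which must then be moved to $(d_1,\ldots,d_r)$ by a gauge transformation at $x_r$; this is exactly right, since a gauge parameter $c$ at $x_r$ replaces each $a_i$ by $ca_i$ (leaving $d_1,\ldots,d_{r-1}$ fixed) and twist-conjugates the $r$-th boundary holonomy by $c$, and $\kappa_r(g_r)$ is $\kappa_r$-twisted-conjugate to $d_r$. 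So the proposal is correct, essentially the same argument, and in fact slightly more complete than the paper's own write-up.
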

\begin{proof} 
Using the notation above, put 
$\sigma_1=1,\ \sigma_2=\kappa_1^{-1}, \ldots,\ \sigma_{r-1}=\kappa_1^{-1}\cdots \kappa_{r-2}^{-1}$. 
Equation \eqref{eq:kappas} becomes the condition  $\kappa_r\kappa_{r-1}\cdots \kappa_1=1$. Introducing 
\[ a_1'=a_1,\ a_2'=\kappa_1(a_2),\ a_3'=\kappa_2(\kappa_1(a_3)),\ \ldots\]
the equation for the holonomies becomes 
\[ \prod_{i=1}^{r} a_i' d_i \kappa_i((a_i')^{-1}) =e\]
where we put $a_r'=e$. That is $\prod g_i=e$ where 
\[ g_i= a_i' d_i \kappa_i((a_i')^{-1})\in \Ad_G^{(\kappa_i)}(d_i).
\]
The moment map for $M$ is the map taking $(d_1,\ldots,d_{r-1},a_1,\ldots,a_{r-1})$ to 
$(d_1,\ldots,d_r)$, with $d_r$ determined from the condition $\prod_i g_i=e$. 
\end{proof}

\subsection{Basic properties of $G\kappa$-valued moment maps}
The following statement extends a well-known property of moment maps in symplectic geometry. 
\begin{proposition}
Let $(M,\om,\Phi)$ be a q-Hamiltonian $G$-space with $G\kappa$-valued moment map. For all $m\in M$ we have
\[ \ker(T_m\Phi)^\om=T_m(G\cdot m),\ \ \ 
\on{ran}(\Phi^*\theta^R)_m=\g_m^\perp.\]
(For any subspace $V\subset T_mM$, the notation $V^\om$ means the set of all $v\in T_mM$ such that $\omega(v,w)=0$ for all $w\in V$.) 
\end{proposition}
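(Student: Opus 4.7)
The plan is to prove both identities together by combining axiom (b) (moment-map equation) to get two containments, and axiom (c) (minimal degeneracy) to match dimensions, which will force equality.

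First I would rewrite axiom (b) in a form directly connecting the generating vector field map $\rho_m\colon\g\to T_mM,\ \xi\mapsto \xi_M(m)$ (kernel $\g_m$, image $T_m(G\cdot m)$) and the right-trivialized differential $\Psi_m=\Phi^*\theta^R\colon T_mM\to \g$ (kernel $\ker T_m\Phi$, image $\on{ran}(\Phi^*\theta^R)_m$). Using $\Phi^*\theta^L=\Ad_{\Phi(m)^{-1}}\Phi^*\theta^R$, axiom (b) becomes
\[ \om_m(\rho_m(\xi),v)=-\tfrac12\,\bigl((\Ad_{\Phi(m)}\kappa+I)\xi\bigr)\cdot\Psi_m(v).\]
From this, the inclusion $T_m(G\cdot m)\subset \ker(T_m\Phi)^\om$ is immediate: if $v\in\ker T_m\Phi$ then $\Psi_m(v)=0$, hence $\om_m(\rho_m(\xi),v)=0$ for every $\xi$. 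For the second identity, take $\xi\in\g_m$; then $\rho_m(\xi)=0$, and by equivariance of $\Phi$ and the stabilizer formula \eqref{eq:stab} we have $\Ad_{\Phi(m)}\kappa(\xi)=\xi$, so $(\Ad_{\Phi(m)}\kappa+I)\xi=2\xi$. Plugging this in gives $\xi\cdot \Psi_m(v)=0$ for all $v$, proving $\on{ran}(\Phi^*\theta^R)_m\subset \g_m^\perp$.

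Next I would run a dimension count to upgrade both inclusions to equalities. For any 2-form $\omega$ on a vector space $U$ and subspace $V\subset U$ one has $\dim V^\omega=\dim U-\dim V+\dim(V\cap\ker\omega)$. Applied with $V=\ker T_m\Phi$ and using axiom (c) to kill the correction term,
\[ \dim\ker(T_m\Phi)^\om \;=\; \dim M-\dim\ker T_m\Phi \;=\; \dim\on{ran}(\Phi^*\theta^R)_m.\]
Combining with the two containments already established and with $\dim T_m(G\cdot m)=\dim\g-\dim\g_m=\dim\g_m^\perp$, one gets the chain
\[ \dim T_m(G\cdot m)\leq \dim\ker(T_m\Phi)^\om = \dim\on{ran}(\Phi^*\theta^R)_m \leq \dim\g_m^\perp=\dim T_m(G\cdot m),\]
so all four dimensions agree. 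Together with the two inclusions proved above, this gives both asserted equalities.

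The only real subtlety, which I would flag as the single place the argument could go wrong, is the step using axiom (c): one needs $\ker\om_m\cap\ker T_m\Phi=0$ so that the general dimension formula $\dim V^\om=\dim U-\dim V+\dim(V\cap\ker\om)$ loses its correction term. Apart from this, everything is linear algebra fed by the explicit form of axiom (b) and the intertwining relation $\Phi^*\theta^L=\Ad_{\Phi(m)^{-1}}\Phi^*\theta^R$ used to rewrite it. No appeal to the closedness axiom (a) or to simple-connectedness of $G$ is needed at this local, pointwise stage.
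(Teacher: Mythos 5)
Your argument is correct and mirrors the paper's own proof essentially line for line: you rewrite axiom (b) via $A=\Ad_{\Phi(m)}\circ\kappa$, establish the two inclusions (using $\g_m\subset\g_{\Phi(m)}=\ker(A-I)$ for the second), and close with the same dimension count whose single nontrivial input is axiom (c). The only cosmetic difference is that the paper derives $\dim\ker(T_m\Phi)^\omega=\dim T_mM-\dim\ker T_m\Phi$ directly from nondegeneracy on $\ker T_m\Phi$ rather than quoting the general formula with the correction term, but the content is identical.
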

\begin{proof}
In terms of $A=\Ad_{\Phi(m)}\circ\,\kappa$, the moment map condition gives 
\begin{equation}\label{eq:rewrite}
 \iota(\xi_M)\om_m =-\hh((A+I)\xi)\cdot (\Phi^*\theta^R)_m.
\end{equation}
In particular, for  $\xi\in\g_m$, we get that 
\[ \hh ((A+I)\xi )\cdot (\Phi^*\theta^R)_m=0.\]
But  $\g_m\subset \g_{\Phi(m)}=\ker(A-I)$, so $A$  acts as the identity on $\g_m$. Hence we obtain $\xi\cdot (\Phi^*\theta^R)_m=0$, proving $\on{ran}(\Phi^*\theta^R)_m\subset \g_m^\perp$. On the other hand, it is immediate from the moment map condition that $\ker(T_m\Phi)^\om\supseteq T_m(G\cdot m)$. Equality of both inclusions follows by a dimension count: 
\[ \begin{split}
\dim(G\cdot m) &\le \dim(\ker(T_m\Phi)^\om)
\\
& =\dim T_mM-\dim (\ker(T_m\Phi))\\
&=\dim(\on{ran}(\Phi^*\theta^R)_m)\\
& \le \dim \g_m^\perp=\dim(G\cdot m).
\end{split}\]
Here we used $\ker(\om)\cap \ker(T\Phi)=0$ for the first equality sign. 
\end{proof}
\begin{proposition}
The map $\g\to T_mM$given by the infinitesimal action restricts to an isomorphism, 
\[ \ker(\Ad_{\Phi(m)}\circ\,\kappa+I)\xrightarrow{\cong} \ker(\om_m).\]
\end{proposition}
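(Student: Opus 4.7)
The starting point is the reformulation of the moment map condition already used in the preceding proposition: writing $A=\Ad_{\Phi(m)}\circ\,\kappa$, one has
\[
\iota(\xi_M)\om_m=-\tfrac{1}{2}\bigl((A+I)\xi\bigr)\cdot(\Phi^*\theta^R)_m.
\]
Well-definedness of the restricted map is immediate: if $\xi\in\ker(A+I)$, the right-hand side vanishes identically, and hence $\xi_M(m)\in\ker(\om_m)$. For injectivity, in right trivialization one has $T_m\Phi(\xi_M(m))=(A-I)\xi$; on $\ker(A+I)$ this operator acts as $-2\,I$, so $\xi_M(m)=0$ forces $\xi=0$.

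The substantive step is surjectivity. Given $v\in\ker(\om_m)$, the vanishing $\om_m(v,\zeta_M(m))=0$ for every $\zeta\in\g$ combines with the identity above to give
\[
\bigl((A+I)\zeta\bigr)\cdot(\Phi^*\theta^R)_m(v)=0\qquad\text{for all }\zeta\in\g.
\]
Since the inner product is $\kappa$-invariant (and $\Ad$-invariant), $A$ is orthogonal and its transpose coincides with $A^{-1}$. Non-degeneracy of the pairing then forces $\eta:=(\Phi^*\theta^R)_m(v)$ to lie in $\ker(A^{-1}+I)=\ker(A+I)$. Set $\xi:=-\tfrac{1}{2}\eta\in\ker(A+I)$; using $A\eta=-\eta$, a direct check gives $(A-I)\xi=\eta$, and hence $T_m\Phi(\xi_M(m))=\eta=T_m\Phi(v)$ in right trivialization. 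The difference $v-\xi_M(m)$ therefore lies in $\ker(T_m\Phi)$, and it also lies in $\ker(\om_m)$ by the well-definedness step; axiom (c) of the q-Hamiltonian definition forces $v=\xi_M(m)$.

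The main obstacle is the surjectivity step: one must recognize that $(\Phi^*\theta^R)_m(v)$ automatically lands in the $(-1)$-eigenspace of $A$, for which one needs the orthogonality of $A$, i.e.\ the $\kappa$-invariance of the chosen inner product on $\g$. Once this is in hand, the explicit preimage $\xi=-\tfrac{1}{2}\eta$ is forced by the action of $A-I$ on $\ker(A+I)$, and the nondegeneracy axiom $\ker(\om)\cap\ker(T\Phi)=0$ closes out the argument.
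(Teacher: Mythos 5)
Your argument is correct, and it takes a genuinely different route from the paper. The paper proves this proposition via the Dirac-geometric formalism from Remark 3.4: it forms the forward image $E_1$ of $T_mM$ under the linear Dirac morphism $(T_m\Phi,\omega_m)$, notes that $E_1=E_1^\perp$ automatically, shows the explicit Lagrangian $E=\{\xi_G+\tfrac12\theta^R\cdot(A+I)\xi\}$ satisfies $E=E^\perp$ and $E\subseteq E_1$, and concludes $E_1=E$; the proposition then drops out by specializing to $\alpha=0$. Your proof unwinds all of this into a direct linear-algebra computation: you verify containment of $\ker(A+I)$ into $\ker(\omega_m)$ from the moment-map identity, get injectivity from the fact that $A-I$ acts by $-2I$ on $\ker(A+I)$, and obtain surjectivity by showing $\eta=(\Phi^*\theta^R)_m(v)$ is forced into the $(-1)$-eigenspace of $A$ by orthogonality of $A$, producing an explicit preimage $\xi=-\tfrac12\eta$ and closing with the nondegeneracy axiom $\ker\omega\cap\ker T\Phi=0$. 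Both proofs rest on the same three ingredients (the moment-map identity with $A=\Ad_{\Phi(m)}\circ\kappa$, the orthogonality of $A$, and axiom (c)), but your version is elementary and self-contained, avoiding the Courant-algebroid machinery entirely; the paper's version is shorter once the Dirac framework is set up and makes the self-dual ($E=E^\perp$) structure conceptually transparent. One small point worth making explicit: orthogonality of $A$ requires the inner product on $\g$ to be invariant under $\kappa$ as well as $\Ad$; the paper assumes an $\Aut(G)$-invariant metric (Lemma 2.4), and the Dirac proof needs the same input in the verification $E=E^\perp$, so your reliance on it is consistent with the paper's hypotheses.
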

\begin{proof}
Here, the Dirac-geometric viewpoint from Remark \ref{rem:dirac} is convenient. Let $\mathbb{T} G_\eta$ be 
as in that remark. The subspace 
\[ E_1=\{T\Phi(v)+\alpha\in \mathbb{T} G_\eta|\ v\in T_mM,\ \alpha\in T^*_{\Phi(m)}G,\ 
\Phi^*\alpha=\iota(v)\omega_m\}\]
is the `forward image' of $T_mM\subset \mathbb{T} M=TM\oplus T^*M$ under the linear Dirac morphism 
$(T_m\Phi,\omega_m)$; in particular it satisfies $E_1=E_1^\perp$. 
The axioms show that $E_1$ contains the space 
\[ E=\big\{\xi_G +\f{1}{2}\theta^R\cdot(A+I)\xi\ \big|\ \ \xi\in\g\big\}\]
(everything evaluated at $\Phi(m)$). Here $\xi_G$ are the generating vector fields for the $\kappa$-twisted conjugation, 
\[ \xi_G=\kappa(\xi)^L-\xi^R=((A-I)\xi)^R.\]
But it is easily checked that $E=E^\perp$, which together with $E\subset E_1$ implies $E_1=E$. 
In particular, taking $\alpha=0$ in the definition of $E_1$ we see that
\[ (T_m\Phi)(\ker\om_m)=\big\{\xi_G(\Phi(m))\big|\ (A+I)\xi=0\big\}.\]
Since $\ker(\om_m)\cap \ker(T_m\Phi)=0$, the map $T_m\Phi$ is injective on  
$\ker(\om_m)$. Consequently, $\ker(\om_m)=\{\xi_M(m)|\ (A+I)\xi=0\}$. 
\end{proof}

\subsection{Changing $\kappa$ by inner automorphisms}\label{sec:changing}
Suppose $\kappa'=\Ad_a \circ\,\kappa$, and let $(M,\om,\Phi)$ be a  q-Hamiltonian  $G$-space with $G\kappa$-valued moment map. Then the manifold $M$ with the same $G$-action and 2-form, but with a shifted moment map $\Phi'=r_{a^{-1}}\circ \Phi$, is a q-Hamiltonian $G$-space with  $G \kappa'$-valued moment map. 
For this reason, if $G$ is compact and simply connected, it usually suffices to consider the case of diagram automorphism $\kappa\in\on{Out}(G)$. But for $\kappa\in\on{Out}(G)$, the 
q-Hamiltonian $G$-spaces with $G\kappa$-valued moment map are simply 
q-Hamiltonian spaces with moment maps valued in the disconnected group 
$G\rtimes \on{Out}(G)$, whose image is contained in the component $G\times \{\kappa\}$. 
(The only wrinkle is that we only consider the action of the identity component $G$ of this group, but this doesn't affect the theory from \cite{al:mom}.)  In this sense, the examples considered above are not new, at least for $G$ compact and simply connected. For instance, 
in the fusion procedure \ref{ex:fusion}, first apply the automorphism $\kappa_1$ to the 
second space, thus obtaining $(M_2,\om_2,\Phi_2')$ with the new $G$-action 
$m\mapsto \kappa_1(g).m$, and a $G\kappa'$-valued moment map 
$\Phi_2'=\kappa_1^{-1}\circ \Phi_2$, where $\kappa'=\kappa_1^{-1}\kappa_2\kappa_1$. 
Since
\[ (\Phi_1,\kappa_1)(\kappa_1^{-1}\circ\Phi_2,\kappa_2)=(\Phi_1\Phi_2,\kappa_2\circ\,\kappa_1),\]
we recognize the fusion product  \ref{ex:fusion} as a standard fusion product \cite{al:mom} for q-Hamiltonian $G$-spaces with $G\rtimes \on{Out}(G)$-valued moment maps.

\section{Convexity properties}
We now turn to the convexity properties of $G\kappa$-valued moment maps. 
The arguments are mostly straightforward adaptations of those in \cite{me:lo} and \cite{le:co}.
Throughout, we will assume that $G$ is compact and simply connected, and that $\kappa\in \on{Aut}(G)$ is a diagram automorphism. We denote by $\Alc^{(\kappa)}\subset \t^\kappa$ the alcove, and by 
\[ q^{(\kappa)}\colon G\to \Alc^{(\kappa)}\] 
the quotient map, with fibers $(q^{(\kappa)})^{-1}(\xi)$ the $\kappa$-twisted conjugacy classes of $\exp(\xi)$. Recall from \ref{subsec:slices} 
the definition of the slices $U_\sigma$. Let $\kappa_\sigma$ denote the restriction of $\kappa$ to $G_\sigma$. 

\begin{proposition}[Cross-section theorem]
Let $(M,\om,\Phi)$ be a connected q-Hamiltonian $G$-space with $G\kappa$-valued moment map. For any face $\sigma\subset \Alc^{(\kappa)}$, the pre-image $Y_\sigma=\Phinv(U_\sigma)$ is a q-Hamiltonian $G_\sigma\kappa_\sigma$-space, 
with the pullback of $\omega$ as the 2-form and the restriction of $\Phi$ as the moment map.  
\end{proposition}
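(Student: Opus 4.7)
The plan is to exhibit $Y_\sigma$ as a smooth $G_\sigma$-invariant submanifold of $M$ and then verify the three q-Hamiltonian axioms by restriction. Since the set $U_\sigma$ is open in the closed subgroup $G_\sigma \subseteq G$ (as established during the proof of Proposition \ref{prop:slice}), it is enough to show that $\Phi$ is transverse to $G_\sigma$ at every $m \in Y_\sigma$. In right trivialization the image of $T_m\Phi$ equals $\on{ran}(\Phi^*\theta^R)_m = \g_m^\perp$, by the results of the preceding subsection. The slice property forces the twisted stabilizer of any element of $U_\sigma$ to lie in $G_\sigma$, so by $G$-equivariance of $\Phi$ we have $\g_m \subseteq \g_{\Phi(m)} \subseteq \g_\sigma$. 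Taking orthogonal complements yields $\g_\sigma^\perp \subseteq \g_m^\perp$, which is the required transversality.

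The submanifold $Y_\sigma$ is automatically $G_\sigma$-invariant since $U_\sigma$ is. To make sense of the restricted moment map, $\kappa$ must also restrict to an automorphism $\kappa_\sigma$ of $G_\sigma$: for any $\xi \in \sigma \subseteq \t^\kappa$ the element $\exp\xi$ is fixed by $\kappa$, and the description $\g_\sigma = \ker(\Ad_{\exp\xi}\circ\,\kappa-I)$ then shows directly that $\g_\sigma$ is $\kappa$-stable. With these structures in place, axioms (a) and (b) follow routinely: the bi-invariant 3-form on $G_\sigma$ defined from the restricted inner product coincides with the pullback of $\eta$ under $G_\sigma\hookrightarrow G$, since Maurer--Cartan forms restrict compatibly, so restricting $d\omega = -\Phi^*\eta$ to $Y_\sigma$ gives axiom (a); axiom (b) holds for $\xi \in \g_\sigma$ by direct restriction of the corresponding axiom on $M$, using $\kappa_\sigma(\xi) = \kappa(\xi)$ and the fact that the $G_\sigma$-generating vector field of $\xi$ on $Y_\sigma$ is $\xi_M|_{Y_\sigma}$.

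The main obstacle is axiom (c), the nondegeneracy condition, which I would reduce to the original nondegeneracy on $M$. Suppose $v \in T_mY_\sigma$ satisfies $T_m\Phi(v) = 0$ and $\omega(v,w)=0$ for all $w \in T_mY_\sigma$; the goal is to deduce $v = 0$. The key observation is that $G\cdot Y_\sigma$ is open in $M$: pulling back the open embedding $G\times_{G_\sigma} U_\sigma \hookrightarrow G$ of Proposition \ref{prop:slice} via $\Phi$ identifies $G\cdot Y_\sigma$ with $G\times_{G_\sigma} Y_\sigma$. Consequently $T_mM = T_mY_\sigma + \g\cdot m$, so an arbitrary $w' \in T_mM$ decomposes as $w' = w + \xi_M(m)$ with $w \in T_mY_\sigma$ and $\xi \in \g$, and the moment map condition (b) combined with $T_m\Phi(v)=0$ gives
\[ \omega(v,\xi_M(m)) = \tfrac12\,\bigl(\kappa(\xi)\cdot\theta^L + \xi\cdot\theta^R\bigr)\bigl(T_m\Phi(v)\bigr) = 0. \]
Hence $\omega(v,w') = 0$ for all $w' \in T_mM$, and $\ker(\omega_m)\cap\ker(T_m\Phi) = 0$ forces $v = 0$, completing the verification.
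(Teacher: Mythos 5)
The paper itself gives no detailed argument here—it simply remarks that the proof is parallel to the non-twisted case in Alekseev–Malkin–Meinrenken and cites that reference. Your proof fills in exactly the details that a direct adaptation of the untwisted cross-section theorem would produce: transversality of $\Phi$ to $G_\sigma$ (from $\on{ran}(\Phi^*\theta^R)_m=\g_m^\perp$ and $\g_m\subseteq\g_{\Phi(m)}\subseteq\g_\sigma$), $\kappa$-stability of $\g_\sigma$, compatibility of the bi-invariant $3$-form and Maurer--Cartan forms under the inclusion $G_\sigma\hookrightarrow G$, and reduction of the minimal-degeneracy axiom to that on $M$ using $T_mM=T_mY_\sigma+\g\cdot m$. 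All of these steps are correct, and this is the intended route.

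One step is stated a bit loosely. You deduce $T_mM=T_mY_\sigma+\g\cdot m$ from the assertion that pulling back the open embedding of Proposition~\ref{prop:slice} ``identifies'' $G\cdot Y_\sigma$ with $G\times_{G_\sigma}Y_\sigma$. The set-theoretic bijection is immediate, but to get the tangent-space decomposition you need the surjectivity of the differential, which is not automatic from a smooth bijection between manifolds of equal dimension. The clean way to close this is to argue directly: given $u\in T_mM$, write $T_m\Phi(u)=w+\xi_G(\Phi(m))$ using the decomposition $T_{\Phi(m)}G=T_{\Phi(m)}G_\sigma+\{\xi_G(\Phi(m)):\xi\in\g\}$ supplied by the proof of Proposition~\ref{prop:slice}; then $T_m\Phi(u-\xi_M(m))=w\in T_{\Phi(m)}G_\sigma$, so $u-\xi_M(m)\in T_mY_\sigma$ by the transversality description $T_mY_\sigma=(T_m\Phi)^{-1}(T_{\Phi(m)}G_\sigma)$. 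With this repair the argument is complete and matches what the paper delegates to the cited reference.
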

The proof is parallel to the result for non-twisted q-Hamiltonian spaces, see \cite{al:mom}, which in turn is a version of the 
cross-section theorem for Hamiltonian spaces, due to Guillemin-Sternberg \cite{gu:no} and Marle \cite{ma:mo}. 

Recall that for any connected $G$-manifold $M$, the  principal stratum $M_{\on{prin}}$ 
is the set of all points whose stabilizer is subconjugate to any other stabilizer. 
It is connected, and open and dense in $M$.  
\begin{proposition}
Let $(M,\om,\Phi)$ be a connected q-Hamiltonian $G$-space with $G\kappa$-valued moment map. Then: \begin{enumerate}
\item
The stabilizer $G_m$ of any point $m\in M_{\on{prin}}$ is an ideal in $G_{\Phi(m)}$. 
\item 
All points in $M_{\on{prin}}\cap \Phinv(\exp(\Alc^{(\kappa)}))$ have the same stabilizer $H$. 
\item 
The image $q^{(\kappa)}(\Phi(M_{\on{prin}}))$ is a connected, relatively open subset of 
\[ (x+\h^\perp)\cap \Alc^{(\kappa)},\] 
where $\h$ is the 
Lie algebra of $H$, and $x$ is any point of $q^{(\kappa)}(\Phi(M_{\on{prin}}))$. 
\end{enumerate}
\end{proposition}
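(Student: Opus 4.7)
My plan is to prove the three parts in sequence, using the cross-section Proposition as the local reduction tool and the tangent-space identity $\on{ran}(\Phi^*\theta^R)_m = \g_m^\perp$ established above as the main computational input. At a principal point $m$, the equivariance of $\Phi$ together with the formula $\xi_G(\Phi(m))=(\Ad_{\Phi(m)}\circ\kappa-I)\xi$ (in right trivialization) shows that the restriction of $T_m\Phi$ to the orbit tangent $T_m(G\cdot m)$ has image $\g_{\Phi(m)}^\perp$, since $\Ad_{\Phi(m)}\circ\kappa$ is orthogonal. Combined with the range identity $T_m\Phi(T_mM)=\g_m^\perp$, passage to quotients makes $T_m\Phi$ descend to a surjective $G_m$-equivariant linear map
\[
T_mM/T_m(G\cdot m) \twoheadrightarrow \g_m^\perp/\g_{\Phi(m)}^\perp \cong \g_{\Phi(m)}/\g_m,
\]
with the right identification coming from the invariant inner product. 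Since $m$ is in the principal stratum, the slice representation of $G_m$ on $T_mM/T_m(G\cdot m)$ is trivial; equivariance then forces the adjoint action of $G_m$ on $\g_{\Phi(m)}/\g_m$ to be trivial, i.e., $[\g_m,\g_{\Phi(m)}]\subseteq \g_m$. Hence $\g_m$ is an ideal in $\g_{\Phi(m)}$, which is part (a).

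For (b), I would combine (a) with connectedness. At a principal $m$ with $\Phi(m)\in\exp(\on{int}(\Alc^{(\kappa)}))$, the containment $G_m\subseteq G_{\Phi(m)}=T^\kappa$ together with abelianness of $T^\kappa$ means that two such stabilizers which are $G$-conjugate are already equal; connectedness of $M_{\on{prin}}\cap \Phinv(\exp(\on{int}(\Alc^{(\kappa)})))$ then identifies them all with a single subgroup $H$. For $m$ with $\Phi(m)$ on a proper face $\sigma$, one works in the cross-section $Y_\sigma$: part (a) says $G_m$ is normal in $G_\sigma$, and since $G_m$ and $H$ are $G$-conjugate and both contained in $G_\sigma$, the upper semicontinuity of stabilizers (as $\Phi(m)$ is approached from the alcove interior) forces $G_m=H$.

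For (c), write $\h=\on{Lie}(H)$. By (b), the range identity reads $T_m\Phi(T_mM)=\h^\perp$ for every $m\in M_{\on{prin}}\cap \Phinv(\exp(\Alc^{(\kappa)}))$, in right trivialization. I would then use the slice description $U_\sigma=\Ad_{G_\sigma}^{(\kappa)}\exp(\Alc_\sigma^{(\kappa)})$ to compute that the differential $dq^{(\kappa)}_a\colon \g\to \t^\kappa$ at $a\in\exp(\Alc^{(\kappa)})$ annihilates both the twisted-conjugacy tangent $\g_{\Phi(m)}^\perp$ and the orthogonal complement of $\t^\kappa$ inside $\g_{\Phi(m)}$, while restricting to the identity on $\t^\kappa$. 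Composing, the image of $d(q^{(\kappa)}\circ\Phi)_m$ equals $\h^\perp\cap\t^\kappa$ — which is the meaning of $\h^\perp$ intended in the statement — yielding both the affine constraint $q^{(\kappa)}(\Phi(M_{\on{prin}}))\subseteq x+\h^\perp$ and its relative openness in $(x+\h^\perp)\cap\Alc^{(\kappa)}$. Connectedness of the image is inherited from that of $M_{\on{prin}}$. The step I expect to require the most care is the propagation in (b) across the stratification of $\Alc^{(\kappa)}$ by faces, since concluding $G_m=H$ as literal subgroups — rather than as $G$-conjugate ones — relies on a careful use of upper semicontinuity combined with the normality supplied by (a).
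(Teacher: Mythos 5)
Your plan diverges from the paper's strategy at a fundamental level. The paper does not attempt a direct proof: it invokes the cross-section theorem to present each $Y_\sigma$ as an ordinary (non-twisted, and then honest Hamiltonian) $G_\sigma$-space via $\Phi_{0,\sigma}=\log(r_{a^{-1}}\circ\Phi_\sigma)$, and then simply quotes the corresponding three statements for Hamiltonian $G$-spaces from Lerman--Meinrenken--Tolman--Woodward, patching over the faces $\sigma$ and connected components $Y_\sigma^i$. Your approach, by contrast, tries to reprove the Hamiltonian inputs from scratch through the tangent-space identities. Parts (a) and (c) of your outline are in the spirit of how the cited Hamiltonian results are themselves proved, but part (b) contains a genuine error.

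The claim in (b) that ``the containment $G_m\subseteq T^\kappa$ together with abelianness of $T^\kappa$ means that two such stabilizers which are $G$-conjugate are already equal'' is false. Two subgroups of an abelian group can be conjugate by an outside element $g\in G$ without being equal: for instance in $\SU(3)$, the circles $\{\diag(z,z^{-1},1)\}$ and $\{\diag(z,1,z^{-1})\}$ in $T$ are Weyl-conjugate but distinct. Abelianness only tells you that conjugation by elements \emph{of $T^\kappa$} is trivial, which handles points in the same $G$-orbit (where the conjugating element is forced into $G_{\Phi(m)}=T^\kappa$) but says nothing about points in different orbits. What one actually needs is local constancy of the stabilizer along $M_{\on{prin}}\cap\Phi^{-1}(\exp(\Alc^{(\kappa)}))$ combined with a proof that this set is connected --- which is exactly how the paper argues, and where the cited Hamiltonian result does the work. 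Note also that the subset $M_{\on{prin}}\cap\Phi^{-1}(\exp(\on{int}\Alc^{(\kappa)}))$ over the open face that you start from may well be empty or disconnected, so the ``handle the interior first, then propagate by upper semicontinuity'' structure is not obviously well-posed. A further, smaller caveat: your argument for (a) delivers $[\g_m,\g_{\Phi(m)}]\subseteq\g_m$, i.e.\ the Lie-algebra statement; promoting this to the group-level statement asserted in the proposition requires an additional word (e.g.\ connectedness of $G_{\Phi(m)}$, which holds here by Proposition~\ref{prop:p1}, plus a check on components of $G_m$). I would recommend replacing the direct proof of (b) with the cross-section reduction: once $Y_\sigma$ is an ordinary Hamiltonian $G_\sigma$-space, all three claims follow by citation, and the only nontrivial bookkeeping is the connectedness of $M_{\on{prin}}\cap\Phi^{-1}(\exp(\Alc^{(\kappa)}))$ and the covering of it by the $Y_\sigma^i$.
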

\begin{proof}
The parallel statements for ordinary Hamiltonian $G$-spaces are proved in 
\cite[Section 3.3]{le:co}. In particular, if $N$ is a connected Hamiltonian $G$-space, with moment map $\Psi\colon N\to \g^*$, then for each $n\in N_{\on{prin}}$, the stabilizer $G_n$ is an ideal in $G_{\Phi(n)}$,  and the stabilizer $H=G_n$ of points in $N_{\on{prin}}\cap \Psi^{-1}(\t^*_+)$ is independent of $n$. We will use cross-sections $Y_\sigma$ 
to reduce to the Hamiltonian case. As noted in the proof of Proposition \ref{prop:slice}, the automorphism $\kappa_\sigma=\kappa|_{G_\sigma}$ 
is inner, and is given by $\Ad_{a^{-1}}$ for any choice of  $a\in \exp(\sigma)$. 
Hence,   $Y_\sigma$ becomes a q-Hamiltonian $G_\sigma$-space  with (untwisted) $G_\sigma$-valued moment map $r_{a^{-1}}\circ \Phi_\sigma$. Furthermore, this then becomes an ordinary Hamiltonian $G_\sigma$-space with a moment map 
\[ \Phi_{0,\sigma}\colon Y_\sigma\to \g_\sigma\cong \g_\sigma^*,\ \ 
m\mapsto \log(\Phi_\sigma(m) a^{-1}).\]
We conclude that for all 
$m\in (Y_\sigma)_{\on{prin}}=Y_\sigma\cap M_{\on{prin}}$, the stabilizer $G_m$ is an ideal in 
the stabilizer of $\Phi_{0,\sigma}(m)$ under the adjoint action. The 
latter coincides with 
stabilizer of $\Phi(m)=\exp(\Phi_{0,\sigma}(m))a$
under twisted conjugation. Hence $G_m$ is an ideal in $G_{\Phi(m)}$. Since the flow-outs of all the $Y_\sigma$'s under twisted conjugation  cover $M$, this proves (a). 

The map $M_{\on{prin}}\cap \Phinv(\exp(\Alc^{(\kappa)}))\to M_{\on{prin}}/G$ is surjective, and has connected fibers $G_{\Phi(m)}.m=G_{\Phi(m)}/G_m$. Since the target of this map is connected, it follows that $M_{\on{prin}}\cap \Phinv(\exp(\Alc^{(\kappa)}))$ is connected. 
Consider the decomposition of each $Y_\sigma$ into its connected components 
$Y_\sigma^i$. 
Passing to the corresponding Hamiltonian $G_\sigma$-space as above, and using the general results for connected Hamiltonian spaces, we see that all points of $Y_\sigma^i\cap M_{\on{prin}}\cap \Phinv(\exp(\Alc^{(\kappa)}))$ 
have the same stabilizer. Since the union of these sets, over all $\sigma,i$, covers 
$M_{\on{prin}}\cap \Phinv(\exp(\Alc^{(\kappa)}))$, it follows that all points of this intersection 
have the same stabilizer, proving (b). 

Each $q^{(\kappa)}(\Phi(M_{\on{prin}})\cap Y_\sigma^i)$  is a connected, relatively open subset of $(x+\h^\perp)\cap \Alc^{(\kappa)}_\sigma$, for any choice of
$x\in q^{(\kappa)}(\Phi(M_{\on{prin}})\cap Y_\sigma^i)$. (Once again, this follows from the corresponding statement for Hamiltonian spaces, see \cite[Section 3.3]{le:co}.) This implies (c).  
\end{proof}
\begin{theorem}[Principal cross-section]
Let $(M,\omega,\Phi)$ be a connected q-Hamiltonian $G$-space with $G\kappa$-valued moment map. Then there exists a unique open face $\sigma$ of $\Alc^{(\kappa)}$ such that 
\[ q^{(\kappa)}(\Phi(M))\subseteq \ol{q^{(\kappa)}(\Phi(M))\cap \sigma}.\]
(Equality holds if $M$ is compact.) Alternatively, $\sigma$ is characterized as the smallest face such that the corresponding cross-section $Y_\sigma$ satisfies 
$\Phi(Y_\sigma)\subset \exp(\sigma)$. This \emph{principal cross-section} $Y_\sigma$ is a connected q-Hamiltonian $T^\kappa$-space, 
with the 
restriction of  $\Phi$ as the moment map, and 
\[ M=\ol{G\cdot Y_\sigma}.\] 
\end{theorem}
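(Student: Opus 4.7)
The strategy adapts the principal cross-section proofs for ordinary Hamiltonian $G$-spaces \cite{le:co} and for untwisted q-Hamiltonian spaces \cite{me:lo}, by reducing to those settings via the preceding cross-section theorem. For each open face $\tau$ of $\Alc^{(\kappa)}$, the slice $Y_\tau$ is a q-Hamiltonian $G_\tau\kappa_\tau$-space; as noted in the proof of Proposition \ref{prop:slice}, the restriction $\kappa_\tau=\kappa|_{G_\tau}$ agrees with $\Ad_{a^{-1}}$ for any $a\in\exp(\tau)$, so right multiplication by $a^{-1}$ converts $Y_\tau$ into an untwisted q-Hamiltonian $G_\tau$-space, and the exponential map further yields an ordinary Hamiltonian $G_\tau$-space in a neighborhood of $a$. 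The classical principal cross-section theorem then applies there.

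Next I would define $\sigma$. By the preceding proposition, $q^{(\kappa)}(\Phi(M_{\on{prin}}))$ is a connected, relatively open subset of $(x+\h^\perp)\cap\Alc^{(\kappa)}$, where $H$ is the common stabilizer on $M_{\on{prin}}\cap\Phinv(\exp(\Alc^{(\kappa)}))$. I take $\sigma$ to be the unique open face of $\Alc^{(\kappa)}$ of largest dimension meeting this image. The crucial, and I expect most delicate, step is to show that $q^{(\kappa)}(\Phi(M_{\on{prin}}))$ actually lies densely in the single open face $\sigma$ rather than crossing walls of the alcove. This follows from part (a) of the preceding proposition, that $H$ must be an ideal in $G_\tau$ for every face $\tau$ in the image, combined with the lattice structure of faces and the local normal-form analysis near walls, as in \cite[Section 3.3]{le:co}. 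Given this, the alternative characterization $\Phi(Y_\sigma)\subset\exp(\sigma)$ is immediate: by the decomposition $U_\sigma=\Ad_{G_\sigma}^{(\kappa)}\exp(\Alc^{(\kappa)}_\sigma)$ into open faces $\tau$ with $\sigma\subseteq\ol{\tau}$, any point of $Y_\sigma$ with $\Phi$-image in some face $\tau\neq\sigma$ of the star would, via $G$-translation, place $\tau$ in the image of $M_{\on{prin}}$, contradicting maximality of $\dim\sigma$.

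Given $\sigma$, the remaining verifications are routine. Since $\Phi(Y_\sigma)\subset\exp(\sigma)\subset T^\kappa$ and $\kappa|_{T^\kappa}=\on{id}$, restricting the $G_\sigma$-action on $Y_\sigma$ to the subtorus $T^\kappa$ makes $Y_\sigma$ a q-Hamiltonian $T^\kappa$-space: axioms (a) and (c) are inherited directly from the $G_\sigma$-structure, and (b) for $\xi\in\t^\kappa$ is a specialization of the $G_\sigma$-moment map condition (using $\kappa(\xi)=\xi$). The identity $M=\ol{G\cdot Y_\sigma}$ holds because $G\cdot Y_\sigma=\Phinv(\Ad_G^{(\kappa)}U_\sigma)$ is open in $M$ (by Proposition \ref{prop:slice}), $G$-invariant, and contains the dense stratum $M_{\on{prin}}$. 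Finally, connectedness of $Y_\sigma$ reduces to that of its dense subset $Y_\sigma\cap M_{\on{prin}}$, which follows by the same surjection-with-connected-fibers argument used in the preceding proposition.
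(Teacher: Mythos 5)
Your choice of $\sigma$ and the argument for $\Phi(Y_\sigma)\subset\exp(\sigma)$ are essentially right, and agree with the paper's (which takes $\sigma$ to be the lowest-dimensional face whose closure contains $(x+\h^\perp)\cap\Alc^{(\kappa)}$). However, the step you flag as ``most delicate''---that $q^{(\kappa)}(\Phi(M_{\on{prin}}))$ does not cross walls---is actually automatic: by the preceding proposition this set lies in $(x+\h^\perp)\cap\Alc^{(\kappa)}$, which by the very definition of $\sigma$ is contained in $\ol{\sigma}$. No appeal to the ideal property or to local normal forms is needed for that containment.

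The genuine gap is in your last two steps. You assert that $G\cdot Y_\sigma$ contains the entire principal stratum $M_{\on{prin}}$, and you invoke the same fact (surjectivity of $Y_\sigma\cap M_{\on{prin}}\to M_{\on{prin}}/G$) to get connectedness of $Y_\sigma$. But $G\cdot Y_\sigma=\Phinv\big((q^{(\kappa)})^{-1}(\sigma)\big)$, while $M_{\on{prin}}$ can very well meet the moment fibers over boundary faces of $\ol{\sigma}$. For an ordinary Hamiltonian illustration, take $G=\SU(2)$ acting diagonally on $S^2\times S^2\times S^2$, each factor a coadjoint orbit of radius $1$: an equilateral-triangle configuration has stabilizer $Z(G)=\Z_2$, hence is principal, yet its moment value is $0$, the vertex of $\t^*_+$, which does not lie in the principal face $\sigma=(0,\infty)$. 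The q-Hamiltonian analogue (a fusion $\Co_s\times\Co_s\times\Co_s$ in $\SU(2)$, with $\Phi^{-1}(e)$ meeting $M_{\on{prin}}$) behaves the same way. So $M_{\on{prin}}\not\subseteq G\cdot Y_\sigma$ in general, and the surjection you need fails. The paper instead uses a codimension argument: $M_{\on{prin}}\setminus\big(G\cdot Y_\sigma\cap M_{\on{prin}}\big)$ is the union of the sets $\Phinv\big((q^{(\kappa)})^{-1}(\tau)\big)\cap M_{\on{prin}}$ over proper faces $\tau\subsetneq\ol{\sigma}$, and each of these is a submanifold of codimension at least $3$; removing them keeps $M_{\on{prin}}$ connected and dense, which gives both $M=\ol{G\cdot Y_\sigma}$ and, via the identification $G\cdot Y_\sigma\cong G\times_{G_\sigma}Y_\sigma$, the connectedness of $Y_\sigma$. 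That codimension estimate is the ingredient your proposal is missing.
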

\begin{proof}
Using the notation from the previous proposition, let $\sigma$ be the lowest dimensional face of $\Alc^{(\kappa)}$ whose closure contains $(x+\h^\perp)\cap \Alc^{(\kappa)}$. Since $q^{(\kappa)}(\Phi(M_{\on{prin}}))$ is a relatively open subset of 
$(x+\h^\perp)\cap \Alc^{(\kappa)}$, its intersection with $\sigma$ is non-empty. It follows that $q^{(\kappa)}(\Phi(M))\cap\sigma
=q^{(\kappa)}(\Phi(Y_\sigma))$. That is, $\Phi(Y_\sigma)\subset \exp(\sigma)\subset T^\kappa$,  
so that $Y_\sigma$ may be regarded as a q-Hamiltonian $T^\kappa$-space, for the restriction of the moment map. 

By construction, $G\cdot Y_\sigma=\Phinv((q^{(\kappa)})^{-1}(\sigma))$. 
The difference
\begin{equation}\label{eq:difference}
M_{\on{prin}}-((G\cdot Y_\sigma)\cap M_{\on{prin}})=
 M_{\on{prin}}-\big(\Phinv((q^{(\kappa)})^{-1}(\sigma))\cap M_{\on{prin}}\big) 
\end{equation}
is the union over all 
$\Phinv((q^{(\kappa)})^{-1}(\tau))\cap M_{\on{prin}}$ where $\tau$ ranges over proper faces of $\ol{\sigma}$. 
But those are submanifolds of codimension at least $3$, hence removing them will not disconnect $M_{\on{prin}}$. Thus
$(G\cdot Y_\sigma)\cap M_{\on{prin}}$ is connected, which implies that 
$G\cdot Y_\sigma=G\times_{G_\sigma}Y_\sigma$ is connected, and therefore $Y_\sigma$ is connected. 
\end{proof}
Note that since the principal cross-section $Y_\sigma$ is a q-Hamiltonian $T^\kappa$-space, it is in particular symplectic. 
\begin{theorem}\label{th:convex}
Let $(M,\om,\Phi)$ be a compact, connected  q-Hamiltonian $G$-space with $G\kappa$-valued moment map. Then 
the fibers of the moment map $\Phi$ are connected, and the image 
\[ \Delta(M):=q^{(\kappa)}(\Phi(M))\subset \Alc^{(\kappa)}\]
is a convex polytope.  
\end{theorem}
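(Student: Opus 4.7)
The plan is to reduce the theorem to the classical convexity result for Hamiltonian torus actions with proper moment map, using the Principal Cross-section Theorem just established. That theorem produces a unique open face $\sigma\subset\Alc^{(\kappa)}$ with $M=\ol{G\cdot Y_\sigma}$, and exhibits $Y_\sigma$ as a connected q-Hamiltonian $T^\kappa$-space whose moment map takes values in $\exp(\sigma)\subset T^\kappa$. Since $\exp$ is injective on $\sigma$ by Proposition \ref{prop:p2}, I set $\Phi_0=(\exp|_\sigma)^{-1}\circ\Phi|_{Y_\sigma}\colon Y_\sigma\to\sigma\subset\t^\kappa$.

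The next step is to promote $(Y_\sigma,\om|_{Y_\sigma},\Phi_0)$ to a genuine symplectic Hamiltonian $T^\kappa$-space. By the kernel-of-$\om$ proposition, at $m\in Y_\sigma$ the space $\ker(\om_m)$ is realized via the infinitesimal $T^\kappa$-action by $\ker(\Ad_{\Phi(m)}\circ\,\kappa+I)\cap\t^\kappa$; but $\Phi(m)\in T^\kappa$ and $\kappa$ both fix $\t^\kappa$ pointwise, so this operator restricts to $2I$ on $\t^\kappa$ and the kernel vanishes. Hence $\om|_{Y_\sigma}$ is symplectic. Since the bi-invariant 3-form $\eta$ vanishes on the abelian $T^\kappa$, the q-Hamiltonian axioms for $\Phi|_{Y_\sigma}$ translate directly into the standard moment map axioms for $\Phi_0$.

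I then invoke the convexity theorem for proper Hamiltonian torus actions \cite{le:co}. Properness of $\Phi_0\colon Y_\sigma\to\sigma$ is immediate from the compactness of $M$: for $K\subset\sigma$ compact, $\Phi_0^{-1}(K)=\Phinv(\exp K)$ is closed in $M$, hence compact. Thus $\Phi_0(Y_\sigma)$ is a closed convex polyhedron with connected fibers, and compactness of $M$ upgrades it to a polytope. Using $M=\ol{G\cdot Y_\sigma}$ together with the $G$-invariance of $q^{(\kappa)}$,
\[
 \Delta(M)=q^{(\kappa)}(\Phi(M))=\ol{q^{(\kappa)}(\Phi(Y_\sigma))}=\ol{\Phi_0(Y_\sigma)}
\]
is a convex polytope, as claimed.

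For the connectedness of the fiber $\Phinv(a)$ at a general $a\in\Phi(M)$, $G$-equivariance lets us assume $a=\exp\xi$ with $\xi\in\ol{\sigma}$. When $\xi\in\sigma$ the fiber coincides with $\Phi_0^{-1}(\xi)\subset Y_\sigma$ and is connected by the torus step above. When $\xi$ lies on a proper face $\tau$ of $\ol{\sigma}$, I would apply the Cross-section Theorem for $\tau$ to obtain a q-Hamiltonian space for a strictly smaller subgroup and induct on $\dim G$, exploiting the codimension-three argument used in the proof of the Principal Cross-section Theorem to carry connectedness through the boundary strata. The main obstacle is precisely this boundary case, i.e.\ uniform fiber-connectedness over all of $\Delta(M)$ (including its faces), which is where the proper---rather than merely compact---torus convexity theorem \cite{le:co} is essential.
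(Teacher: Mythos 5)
Your treatment of the convexity of $\Delta(M)$ is essentially the paper's own argument: you use the Principal Cross-Section Theorem to get a connected q-Hamiltonian $T^\kappa$-space $Y_\sigma$ with moment map into $\sigma$, observe that this is a genuine proper symplectic Hamiltonian $T^\kappa$-space, invoke \cite[Theorem 4.3]{le:co}, and take the closure. That part is correct. (One small caveat in the symplecticity step: the kernel-of-$\om$ proposition as you invoke it computes $\ker(\om_m)$ for the $G$-space $M$, and intersecting that with $\t^\kappa$ is not \emph{a priori} the same as the kernel of the restricted form $\om|_{Y_\sigma}$, since a vector in $T_mY_\sigma$ could pair nontrivially with a vector outside $T_mY_\sigma$; the cleanest route is to apply the same proposition directly to the q-Hamiltonian $T^\kappa$-space $(Y_\sigma,\om|_{Y_\sigma},\Phi|_{Y_\sigma})$, where $\Ad_{\Phi(m)}\circ\kappa$ restricted to $\t^\kappa$ is the identity, so the kernel is trivial. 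The conclusion is right, the justification is slightly off.)

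The genuine gap is exactly where you flag it: connectedness of fibers over boundary points of $\sigma$. The inductive scheme you sketch — passing to the cross-section $Y_\tau$ for a smaller face $\tau\subset\ol\sigma$ and inducting on $\dim G_\tau$ — would need substantial work: you would have to relate $\Phinv(\exp\xi)$ for $\xi\in\tau$ to a moment fiber of $Y_\tau$, identify a base case, and verify the induction is well-founded, none of which is obvious. The paper avoids this entirely. It shows (by the same argument as in \cite{le:co}) that for any $x\in q^{(\kappa)}(\Phi(M))$ and any small ball $B\ni x$, the saturated preimage $\Phinv\bigl((q^{(\kappa)})^{-1}(B)\bigr)$ is connected, and then applies the continuity lemma \cite[Lemma 5.1]{le:co} to conclude that $\Phinv\bigl((q^{(\kappa)})^{-1}(x)\bigr)$, and hence each $\Phi$-fiber, is connected. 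This handles interior and boundary points of $\sigma$ uniformly, whereas your approach treats them by different mechanisms and leaves the harder one unfinished. So: convexity is done and matches the paper; fiber connectedness needs the ball-preimage plus continuity argument, which you did not supply.
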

\begin{proof}
The principal cross-section $Y=Y_\sigma$ is a connected q-Hamiltonian $T^\kappa$-space, with the restriction $\Phi_Y=\Phi|_Y$ as its moment map. We can regard $Y$ as an ordinary Hamiltonian $T^\kappa$-space, with a moment map $\Phi_{Y,0}=q^{(\kappa)}\circ \Phi_Y$ that is proper as a map to $\sig\subset \t^\kappa$.  

Since $\sig$ is convex, \cite[Theorem 4.3]{le:co} shows that $\Phi_{Y,0}$ has connected fibers, and its image is a convex set of the form 
\[ q^{(\kappa)}(\Phi(Y))=\Phi_{Y,0}(Y)=P\cap \sigma,\] where $P$ is some convex polytope in $\ol{\sigma}$. But then $q^{(\kappa)}(\Phi(M))=\ol{q^{(\kappa)}(\Phi(Y))}=P$. Finally, if $x\in q^{(\kappa)}(\Phi(M))$, then the same argument as in 
\cite{le:co} shows that for any open ball $B$ around $x$, the pre-image $\Phinv((q^{(\kappa)})^{-1}(B))$ is connected. 
By a continuity argument \cite[Lemma 5.1]{le:co} this implies that $\Phinv(x)$ is connected.  
\end{proof}

We obtain Theorem \ref{th:convexity} as a special case: 
\begin{proof}[Proof of Theorem \ref{th:convexity}]
Consider again the twisted moduli space for the $r$-holed sphere $\Sig_0^r$, 
corresponding to $\kappa_i\in \on{Out}(G)$ with $\kappa_r\kappa_{r-1}\cdots \kappa_1=1$, as in Lemma \ref{lem:modspace0r}.
We had found that the moment map image consists of all $(d_1,\ldots,d_r)$ for which there exist elements $g_i\in G$ in the $\kappa_i$-twisted conjugacy class of $d_i$, such that $g_1\cdots g_r=e$. Hence, by Theorem \ref{th:convex} the set \eqref{eq:product2} is a convex polytope.
\end{proof} 

\section{An example}\label{sec:example}
We will illustrate Theorem \ref{th:convexity} in a simple setting, were the resulting polytope can be computed by hand. Let $G=A_2\cong \SU(3)$, with its standard maximal torus $T$ consisting of diagonal matrices, and its usual choice of positive roots. We denote by $\alpha,\beta$ the simple roots, and let $\gamma=\alpha+\beta$ be their sum. 
The fundamental alcove $\Alc\subset\t$ is defined by the inequalities 
$\l\alpha,\xi\r\ge 0,\ \l\beta,\xi\r\ge 0,\ \l\gamma,\xi\r\le 1$. 
Let $\kappa\in\on{Aut}(G)$ be the nontrivial diagram automorphism of $G$ given by
$\kappa(\alpha)=\beta$ and $\kappa(\beta)=\alpha$. 
%The diagram automorphism $\kappa$ is related to complex conjugation $\kappa'(g)=\ol{g}$ by an inner automorphism $\Ad_a$, where $a$ is an anti-diagonal matrix with entries $1,-1,1$ down the anti-diagonal.
 
%The fixed point torus $T^\kappa$ consists of diagonal matrices $\on{diag}(z,1,z^{-1})$ with $z\in \U(1)$, while $T_\kappa$ consists of diagonal matrices $\on{diag}(w,w^{-2},w)$ with $w\in \U(1)$. Note $T^\kappa\cap T_\kappa=\Z_2$. 

The Lie algebra $\t^\kappa$ consists of all $\xi$ such that $\l\alpha,\xi\r=\l\beta,\xi\r$; 
it is thus the line spanned by the coroot $\gamma^\vee$. The alcove $\Alc^{(\kappa)}$ is `half' of the intersection  $\Alc\cap \t^\kappa$, i.e. 
it consists of elements of $\t^\kappa$ with $\l\gamma,\xi\r\in [0,\f{1}{2}]$. 
We thus label the $\kappa$-twisted conjugacy classes by a parameter $s\in [0,\f{1}{2}]$, where $\Co^{(\kappa)}_s$ contains $\exp(\xi_s)$ for a unique $\xi_s\in \Alc^\kappa$ with $\l\gamma,\xi_s\r=s$. 

Consider the setting of Theorem \ref{th:convexity}, with $r=3$. Unless all $\kappa_i=1$, two of the automorphisms $\kappa_1,\kappa_2,\kappa_3$ have to be $\kappa$, and the third is the identity. We may assume $\kappa_1=\kappa_2=\kappa$ and $\kappa_3=1$. 
Hence, 
\[ \Alc^{(\kappa_1)}\times \Alc^{(\kappa_2)}\times \Alc^{(\kappa_3)}
=\big[0,\f{1}{2}\big]\times \big[0,\f{1}{2}\big]\times \Alc.\]
\begin{proposition}
For $G=A_2\cong \SU(3)$ with its non-trivial diagram automorphism $\kappa$, the polytope of  all $(s_1,s_2,\xi)\in \big[0,\f{1}{2}\big]\times \big[0,\f{1}{2}\big]\times \Alc$ such that there 
exists $(g_1,g_2,g_3)\in \Co^{(\kappa)}_{s_1}\times \Co^{(\kappa)}_{s_2}\times \Co_\xi$ with $g_1g_2g_3=e$, is given by the inequalities 
$0\le s_i\le \f{1}{2}$ together with 
\[ |s_1-s_2| \le \l\alpha+\beta,\xi\r\le 1\,\ \ \ \ 
  |s_1-s_2| \le 1-\l\alpha,\xi\r\le 1,\ \ \ \ 
    |s_1-s_2| \le 1-\l\beta,\xi\r\le 1.\]
\end{proposition}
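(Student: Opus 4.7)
By Theorem~\ref{th:convexity}, the set in question is a convex polytope in $[0,\tfrac{1}{2}]^2\times\Alc$, so the task is to identify its defining inequalities. The domain constraints $s_i\in[0,\tfrac{1}{2}]$ and $\xi\in\Alc$ give the ``$\le 1$'' parts of the listed inequalities automatically; the real content is the three binding conditions of the form $|s_1-s_2|\le f(\xi)$.

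The plan for necessity is to reduce to an untwisted four-factor multiplicative eigenvalue problem in $\SU(3)$. Consider the map $\phi\colon g\mapsto g\kappa(g)$, which sends the $\kappa$-twisted class $\Co^{(\kappa)}_s$ to the ordinary class $\Co_{s\gamma^\vee}$ of $\exp(s\gamma^\vee)=\diag(e^{2\pi is},1,e^{-2\pi is})$. From $g_1=(g_2g_3)^{-1}$, a direct computation gives $\phi(g_1)^{-1}=\kappa(g_2)\kappa(g_3)g_2g_3$, which is $G$-conjugate to $\phi(g_2g_3)$. Since $\Co_{s_1\gamma^\vee}$ is self-inverse (as $s_1\gamma^\vee\in\t^\kappa$), this forces $\phi(g_2g_3)\in\Co_{s_1\gamma^\vee}$. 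Now
\[
\phi(g_2g_3)\;=\;g_2g_3\kappa(g_2)\kappa(g_3)\;=\;\phi(g_2)\cdot\Ad_{\kappa(g_2)^{-1}}(g_3)\cdot\kappa(g_3)
\]
exhibits the left side as a product of three ordinary-conjugacy elements in $\Co_{s_2\gamma^\vee}$, $\Co_\xi$, and $\Co_{\kappa(\xi)}$ respectively (with $\kappa(\xi)=-w_0(\xi)$ labeling the class of $g_3^{-1}=\exp(-\xi)$); combined with $\phi(g_2g_3)\in\Co_{s_1\gamma^\vee}$, we obtain the untwisted $\SU(3)$ four-factor multiplicative Horn condition
\[
e\;\in\;\Co_{s_2\gamma^\vee}\cdot\Co_\xi\cdot\Co_{\kappa(\xi)}\cdot\Co_{s_1\gamma^\vee}.
\]
The three claimed binding inequalities then follow from the classical Agnihotri-Woodward-Belkale-Teleman-Woodward inequalities (\cite{ag:ei,bel:loc,te:pa}) for this four-factor problem specialized to the labels above. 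For instance, $|s_1-s_2|\le\langle\alpha+\beta,\xi\rangle$ comes from the Horn condition $\lambda_1^{(4)}+\lambda_3^{(3)}+\lambda_3^{(2)}+\lambda_3^{(1)}\le 0$ and its $s_1\leftrightarrow s_2$ counterpart; the other two inequalities arise analogously, and the middle-eigenvalue vanishing of $s_i\gamma^\vee$ together with the $\kappa$-duality between $\xi$ and $\kappa(\xi)$ collapses the remaining Schubert conditions to these three.

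For sufficiency, convexity reduces verification to exhibiting explicit representatives at each vertex of the candidate polytope. By Proposition~\ref{prop:p1}(b), $\Co^{(\kappa)}_{s_i}\cap T^\kappa$ consists (for generic $s_i$) of the four elements $\exp(\pm\xi_{s_i})$ and $\diag(-1,1,-1)\exp(\pm\xi_{s_i})$. Toric products $\exp(\xi_{s_1})\cdot b$ with $b\in\Co^{(\kappa)}_{s_2}\cap T^\kappa$ produce torus elements whose ordinary $\Alc$-classes have $\langle\gamma,\cdot\rangle\in\{s_1+s_2,|s_1-s_2|,1-s_1-s_2,1-|s_1-s_2|\}$, realizing the on-axis extremes. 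The remaining vertices require non-toric constructions; for instance, at $(s_1,s_2)=(\tfrac{1}{2},0)$, taking $h$ to be the rotation by $\pi/2$ in the $(1,2)$-plane and computing $g_1=h\exp(\xi_{1/2})\kappa(h^{-1})$ yields an element with characteristic polynomial $\lambda^3-1$, i.e.\ a representative of $\xi=\diag(\tfrac{1}{3},0,-\tfrac{1}{3})$. Similar explicit constructions, together with convexity, fill in the polytope.

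The main obstacle is the Horn reduction step: after specialization to the labels $(s_1\gamma^\vee, s_2\gamma^\vee, \xi, \kappa(\xi))$, showing that exactly the three claimed inequalities remain binding among all $\SU(3)$ four-factor Schubert-calculus conditions requires careful bookkeeping. A more elementary alternative is to compute the moment polytope of the fusion product $M=\Co^{(\kappa)}_{s_1}\fus\Co^{(\kappa)}_{s_2}$ (a q-Hamiltonian $G$-space with ordinary $G$-valued moment map, since $\kappa^2=1$) directly via its $T$-principal cross-section, whose $T$-fixed points consist of the toric configurations above together with their $W$-translates.
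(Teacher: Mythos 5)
Your approach differs genuinely from the paper's. The paper computes the moment polytope of the fusion product $\Co^{(\kappa)}_{s_1}\fus\Co^{(\kappa)}_{s_2}$ (an untwisted q-Hamiltonian $G$-space, since $\kappa^2=1$) via three structural observations: the set of products $g_1g_2$ is invariant under left translation by $Z(G)\cong\Z_3$, so the moment polytope is invariant under the induced ``rotation'' of $\Alc$; cross-section theory shows the polytope is cut out of $\Alc$ by affine half-spaces orthogonal to one-dimensional stabilizer algebras, which are all $W$-conjugates of $\t^\kappa$; and the $\Z_3$-symmetry forces the three resulting constraints to share a single constant $r$, determined by a short $T^\kappa$-fixed-point computation to be $|s_1-s_2|$. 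Your route instead squares via $\phi\colon g\mapsto g\kappa(g)$ to reduce to an untwisted four-factor multiplicative Horn problem in $\SU(3)$; the algebra in that reduction (conjugacy of $\phi(g_1)^{-1}$ to $\phi(g_2g_3)=\phi(g_2)\cdot\Ad_{\kappa(g_2)^{-1}}(g_3)\cdot\kappa(g_3)$, self-inverseness of $\Co_{s\gamma^\vee}$, and the identification $\phi(\Co^{(\kappa)}_s)\subset\Co_{s\gamma^\vee}$) does check out.

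That said, there are two genuine gaps. On the necessity side, the decisive step --- extracting exactly the three claimed binding inequalities from the Agnihotri--Woodward/Belkale/Teleman--Woodward conditions for the specialized four-factor problem $\Co_{s_2\gamma^\vee}\cdot\Co_\xi\cdot\Co_{\kappa(\xi)}\cdot\Co_{s_1\gamma^\vee}\ni e$ --- is declared to require ``careful bookkeeping'' and is not carried out; without it, even the containment of the true set inside the claimed polytope is unproved. On the sufficiency side, you exhibit representatives at only two sorts of configurations and assert that ``similar explicit constructions, together with convexity, fill in the polytope''; a complete argument would have to enumerate every vertex of the candidate $4$-dimensional polytope and realize each one. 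Your ``more elementary alternative'' at the end essentially names the paper's method but does not execute it, and it omits the two ingredients that make the paper's computation short: the $\Z_3$ center symmetry that collapses the three unknown inequalities to a single parameter $r$, and the identification of $T^\kappa$ (rather than $T$) and its $W$-conjugates as the only one-dimensional stabilizers governing the facets.
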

\begin{proof}
%[Outline of proof]
The problem of computing this polytope is equivalent to computing the moment polytope of the fusion product $\Co^{(\kappa)}_{s_1}\times \Co^{(\kappa)}_{s_2}$ for any $s_1,s_2$. This fusion product is an untwisted q-Hamiltonian $G$-space, with action 
\[ h\cdot(g_1,g_2)=\big(h\,g_1\,\kappa(h)^{-1},\ \kappa(h)g_2\,h^{-1}\big)\]
and moment map $(g_1,g_2)\mapsto g_1g_2$; its moment polytope is a 2-dimensional convex polytope inside $\Alc$. Observe that the set of $g_1g_2$ with $g_i\in \Co^{(\kappa)}_{s_i}$ is invariant under left-translation by central elements $c\in Z(G)\cong \Z_3$. This follows from 
\[ \Ad^\kappa_{c^{-1}}(g)=c^{-1}g\kappa(c)=c^{-1} g c^2=cg.\] 
Left multiplication of the center on $G$ induces an action on the set of conjugacy classes, 
and the resulting action of $\Z_3$ on the alcove $\Alc$ is by `rotation'. 

Hence, the moment polytope is invariant under `rotations' of the alcove. If $s_1=s_2=0$, this implies that the moment polytope must be all of $\Alc$, since it contains the origin. If at least one of $s_1,s_2$ is non-zero, the moment polytope 
does \emph{not} contain the origin. Using standard results from symplectic geometry, applied to the symplectic cross-section, it is cut out from the alcove by affine half-spaces orthogonal to 1-dimensional stabilizer groups. But the generic stabilizer for the twisted 
conjugation action of $G$ on itself is $T^\kappa$, and all other 1-dimensional stabilizers are $W$-conjugate to $T^\kappa$. (The fixed point set of $T$ is trivial.) Together with the rotational symmetry, it  follows that the moment polytope is cut out from the alcove by inequalities of the form 
$r\le \l\gamma,\xi\r,\ \ r\le 1-\l\alpha,\xi\r,\ \ r\le 1-\l\beta,\xi\r$, for some 
$0<r<\f{1}{2}$. To find $r$, it suffices to determine the fixed point set of $T^\kappa$ on the product of twisted conjugacy classes, and takes its image under the multiplication map. Since the action of $T^\kappa$ is just ordinary conjugation, and since $T^\kappa$ contains regular elements, the fixed point set for each factor is 
\[ \Co^{(\kappa)}_{s_i}\cap T=\exp(\xi_{s_i}+\t_\kappa)\cup \exp(-\xi_{s_i}+\t_\kappa),\]
and the image under multiplication is $\exp(\xi_{\pm s_1\pm s_2}+\t^\kappa)\subset T$. We conclude $r=|s_1-s_2|$. 
\end{proof}

\bibliographystyle{amsplain}
\def\cprime{$'$} \def\polhk#1{\setbox0=\hbox{#1}{\ooalign{\hidewidth
  \lower1.5ex\hbox{`}\hidewidth\crcr\unhbox0}}} \def\cprime{$'$}
  \def\cprime{$'$} \def\cprime{$'$} \def\cprime{$'$}
  \def\polhk#1{\setbox0=\hbox{#1}{\ooalign{\hidewidth
  \lower1.5ex\hbox{`}\hidewidth\crcr\unhbox0}}} \def\cprime{$'$}
  \def\cprime{$'$} \def\cprime{$'$} \def\cprime{$'$} \def\cprime{$'$}
\providecommand{\bysame}{\leavevmode\hbox to3em{\hrulefill}\thinspace}
\providecommand{\MR}{\relax\ifhmode\unskip\space\fi MR }
% \MRhref is called by the amsart/book/proc definition of \MR.
\providecommand{\MRhref}[2]{%
  \href{http://www.ams.org/mathscinet-getitem?mr=#1}{#2}
}
\providecommand{\href}[2]{#2}

%\bibliography{../../../Biblio/ref}
%\bibliography{../../Biblio/ref}

\begin{thebibliography}{10}

\bibitem{ag:ei}
C.~Agnihotri, S.~and~Woodward, \emph{Eigenvalues of products of unitary
  matrices and quantum {S}chubert calculus}, Math.~Res.~Lett. \textbf{5}
  (1998), no.~6, 817--836.

\bibitem{al:pur}
A.~Alekseev, H.~Bursztyn, and E.~Meinrenken, \emph{Pure spinors on {L}ie
  groups}, Ast\'{e}risque \textbf{327} (2009), 131--199.

\bibitem{al:mom}
A.~Alekseev, A.~Malkin, and E.~Meinrenken, \emph{{L}ie group valued moment
  maps}, J.~Differential Geom. \textbf{48} (1998), no.~3, 445--495.

\bibitem{bai:cl}
T.~Baird, \emph{{Classifying spaces of twisted loop groups}}, Alg. Geom. Topology \textbf{16} (2016), no.~1, 211–229.

\bibitem{bel:loc}
P.~Belkale, \emph{Local systems on {$\Bbb P^1-S$} for {$S$} a finite set},
  Compositio Math. \textbf{129} (2001), no.~1, 67--86.

\bibitem{bel:mu}
P.~Belkale and S.~Kumar, \emph{The multiplicative eigenvalue problem and
  deformed quantum cohomology}, Advances in Math. \textbf{288}, (2016), 1309–-1359

\bibitem{be:coa}
A.~Berenstein and R.~Sjamaar, \emph{{Coadjoint orbits, moment polytopes, and
  the {H}ilbert-{M}umford criterion}}, J.~Amer.~Math.~Soc. \textbf{13} (2000),
  433--466.

\bibitem{boa:qu}
P.~Boalch, \emph{{Quasi-Hamiltonian Geometry of Meromorphic Connections}}, Duke
  Math. J. \textbf{139} (2007), 369--405.

\bibitem{boa:geo}
\bysame, \emph{{Geometry and braiding of Stokes data; Fission and wild
  character varieties}}, Annals of Math. \textbf{179} (2014), 301--365.

\bibitem{boa:twi}
P.~Boalch and D.~Yamakawa, \emph{Twisted wild character varieties}, Preprint,
  arXiv:1512.08091.

\bibitem{bur:di}
H.~Bursztyn and M.~Crainic, \emph{Dirac structures, momentum maps, and
  quasi-{P}oisson manifolds}, The breadth of symplectic and Poisson geometry,
  Progr.~Math., vol. 232, Birkh\"auser Boston, Boston, MA, 2005, pp.~1--40.

\bibitem{gu:no}
V.~Guillemin and S.~Sternberg, \emph{A normal form for the moment map},
  Differential Geometric Methods in Mathematical Physics (Jerusalem, 1982)
  (S.~Sternberg, ed.), Mathematical Physics Studies, vol.~6, D.~Reidel
  Publishing Company, Dordrecht, 1984, pp.~161--175.

\bibitem{kac:inf}
V.~Kac, \emph{Infinite-dimensional {L}ie algebras}, second ed., Cambridge
  University Press, Cambridge, 1985.

\bibitem{kl:st}
A.~Klyachko, \emph{Stable bundles, representation theory and {H}ermitian
  operators}, Selecta Math.~(N.S.) \textbf{4} (1998), no.~3, 419--445.

\bibitem{le:co}
E.~Lerman, E.~Meinrenken, S.~Tolman, and C.~Woodward, \emph{Non-abelian
  convexity by symplectic cuts}, Topology \textbf{37} (1998), 245--259.

\bibitem{ma:mo}
C.-M. Marle, \emph{Mod\`ele d'action {H}amiltonienne d'un groupe de {L}ie sur
  une vari\'et\'e symplectique}, Rend.~Sem.~Mat.~Univ.~Politec.~Torino (1985),
  227--251.

\bibitem{me:lo}
E.~Meinrenken and C.~Woodward, \emph{{H}amiltonian loop group actions and
  {V}erlinde factorization}, J.~Differential Geom. \textbf{50} (1999),
  417--470.

\bibitem{moh:the}
S.~Mohrdieck, \emph{Conjugacy classes of non-connected semisimple algebraic
  groups}, Universit\"at Hamburg, 2000.

\bibitem{moh:int}
S.~Mohrdieck and R.~Wendt, \emph{Integral conjugacy classes of compact {L}ie
  groups}, Manuscripta Math. \textbf{113} (2004), no.~4, 531--547.

\bibitem{res:ge}
N.~Ressayre, \emph{Geometric invariant theory and the generalized eigenvalue
  problem}, Invent. Math. \textbf{180} (2010), no.~2, 389--441.

\bibitem{spr:twi}
T.~A. Springer, \emph{Twisted conjugacy in simply connected groups}, Transform.
  Groups \textbf{11} (2006), no.~3, 539--545.

\bibitem{te:pa}
C.~Teleman and C.~Woodward, \emph{Parabolic bundles, products of conjugacy
  classes, and quantum cohomology}, Ann. Inst. Fourier (Grenoble) (2003),
  713--748.

\bibitem{ver:in}
M.~Vergne. and M.~Walter, \emph{Inequalities for moment cones of
  finite-dimensional representations}, Prerint (2014). arXiv:1410.8144.

\end{thebibliography}
\end{document}